\theoremstyle{plain}
\newtheorem{propn}{Proposition}[section]
\newtheorem{thm}[propn]{Theorem}
\newtheorem{lemma}[propn]{Lemma}
\newtheorem{cor}[propn]{Corollary}
\newtheorem*{thm*}{Theorem}
\theoremstyle{definition}
\newtheorem*{defn}{Definition}
\theoremstyle{remark}
\newtheorem*{rem}{Remark}
\newtheorem*{rems}{Remarks}
\newcommand{\Hil}{\mathcal{H}}
\newcommand{\norm}[1]{\lVert #1 \rVert}
\newcommand{\ov}[1]{\overline{#1}}
\newcommand{\Dt}{D_{T}}
\newcommand{\dt}{\Delta_T}
\newcommand{\cpt}{\Psi_T}
 \newcommand{\dst}{\mathcal{D}_T}
 \newcommand{\ds}[1]{\mathcal{D}_{#1}}
\newcommand{\q}{\mathcal{Q}}
\newcommand{\s}{\mathcal{S}}
\newcommand{\Nat}{\mathbb{N}}
\newcommand{\Comp}{\mathbb{C}}
\newcommand{\Fockd}{\mathcal{F}^{2}_{d}}
\newcommand{\ot}{\otimes}
\DeclareMathOperator{\spa}{span}
\DeclareMathOperator{\ran}{ran}
 \newcommand{\rano}{\ov{\ran}}
\begin{document}

\title{Maximal contractive tuples}

\author[Das] {B. Krishna Das}

\address{%\vskip10pt
(B. K. Das) Indian Statistical Institute \\ Statistics and
Mathematics Unit \\ 8th Mile, Mysore Road \\ Bangalore \\ 560059 \\
India}

\email{dasb@isibang.ac.in}

\author[Sarkar]{Jaydeb Sarkar}

\address{%\vskip10pt
(J. Sarkar) Indian Statistical Institute \\ Statistics and
Mathematics Unit \\ 8th Mile, Mysore Road \\ Bangalore \\ 560059 \\
India}

\email{jay@isibang.ac.in, jaydeb@gmail.com}

\author[Sarkar]{Santanu Sarkar}

\address{%\vskip10pt
(S. Sarkar)  Department of Mathematics\\
  Indian Institute of Science\\
  Bangalore\\
  560 012\\
  India}

\email{santanu@math.iisc.ernet.in}

 \subjclass[2010]{Primary 15A03, 47A13}
  \keywords{Contractive tuples, Defect operators, Defect spaces,
  Drury-Arveson module, Fock space}

 \begin{abstract}
  Maximality of a contractive tuple of operators is considered.
  Characterization of a contractive tuple to be maximal is obtained.
  Notion of maximality of a submodule of Drury-Arveson module on 
  the $d$-dimensional unit ball $\mathbb{B}_d$
  is defined. For $d=1$, it is shown that
  every submodule of the Hardy module over the unit disc is maximal. 
  But for $d\ge 2$ we prove that any homogeneous submodule or
  submodule generated by polynomials is not maximal.
  A characterization of a submodule to be maximal 
  is obtained.
 \end{abstract}

\maketitle

\section{Introduction}
 Let $T=(T_1,\dots,T_d)$ be a $d$-tuple of bounded linear
 operators on some Hilbert
 space $\Hil$. We say that $T$ is a \emph{row contraction}, or,
 \emph{contractive tuple} if the row operator $(T_1,\dots, T_d):
 \Hil^d\to \Hil$ is a contraction or equivalently
 $\sum_{i=1}^d T_iT_i^*\le I_{\Hil}$. The \emph{defect operator}
 $D_T:=(I-\sum_{i=1}^d T_iT_i^*)^{1/2}$ and the \emph{defect dimension}
 $\dt:=\dim[\ov{\ran}D_T]$ associated with the contractive tuple $T$
 is an important invariant in operator theory. For instance,
 a pair of shift operators are unitary equivalent if and only if
 the defect dimensions are the same. The same result holds true
 for $d$-tuple of pure isometries with orthogonal ranges (\cite{popescu}).
 In order to extract more information about contractive tuples
 one can proceed further to form a sequence of defect indices (defined below).

The defect sequence for contractive tuple and the notion of
 maximality of a contractive tuple  was introduced in ~\cite{GaW}
 for $d=1$ case. In a recent paper this notion was extended for
 $d$-tuple of operators (\cite{defect}). The main aim of this paper is
 to characterize maximal contractive tuples in the commuting as well as
 non-commuting case. In the non-commuting setup it turns out that the
 restriction of creation operators on the full Fock space
 to an invariant subspace is always maximal
 but in the commuting setup same conclusion does not hold.
 Examples of submodules of the Drury-Arveson module are given to illustrate the
 above fact and a characterization of a submodule to be maximal is
 also obtained.

 The plan of the paper is as follows. After
 introducing the completely positive map associated
 to a contractive tuple we define defect sequence
 and obtain its properties in Section 2.
 In Section 3, we provide a characterization for
 maximal contractive tuples and consequently establish
 some relations between minimal function of a particular type of single pure
 contraction and the dimension of the Hilbert space
 on which the contraction acts.
 In the last section we investigate the maximality for the tuple
 $( M_{z_1}|_{\s}, \cdots, M_{z_d}|_{\s} ) $ where $\s$ is a
 proper submodule of Drury-Arveson module and the tuple
 $(M_{z_1},\cdots, M_{z_d})$ is the $d$-shift of the
 Drury-Arveson module.

\section{Defect sequence}

 In this section we define the notion of the
 defect sequence of a tuple of contraction and
 study its properties. Some of the result can be
 found in (\cite{defect}) and we include proofs of them as
 it uses different but simple method.
 We fix for this section a contractive $d$-tuple
 $T=(T_1,\dots,T_d)$ of operators acting on a Hilbert space
 $\Hil$ in which the tuple $T$ is not necessarily commuting
 and the Hilbert space $\Hil$ is infinite dimensional
 in general unless otherwise we specify it.

 We begin with defining the completely positive map
 associated to the contractive tuple $T$ as follows:
  \begin{equation}
 \label{cp map}
 \cpt: B(\Hil)\to B(\Hil),\quad X\mapsto \sum_{i=1}^d T_iXT_i^*.
 \end{equation}
 This map is very essence
 for simplifying the study of defect sequences.
  The following decreasing chain of operator inequality
  \[
   I\ge \cpt(I)\ge \cpt^2(I)\ge \dots
  \]
  is immediate from the contractivity of the tuple $T$.
  The contractive tuple $T$ is said to be \emph{pure} if $\cpt^n(I)\to 0$
  in the strong operator topology (S.O.T.) as $n\to\infty$.

 The following rule of multiplication of operator tuples
 is in use. Let $\Lambda=\{1,\dots,d\}$.
 For $n\in\Nat$, we denote $T^n$ by the following $d^n$-tuple of operators
 \[T^n = (T_{i_1}T_{i_2}\dots T_{i_n}:  i_{j}\in\Lambda, j=1,\dots,n),\]
 and for $n=1$ we set $T^1=T$. In particular, for $n=2$,
 $T^2$ is the following $d^2$-tuple
 \[(T_1^2, T_1 T_2,\dots,T_1T_d,T_2T_1, T_2^2,
 \dots,T_d^2): \Hil^{d^2}\to \Hil.\]
 Under this rule of multiplication
 note that $\Psi_{T^{2}} (X) = \cpt(\cpt(X))= \Psi^2_{T} (X)$
 for all $X\in B(\Hil)$, where $\cpt$ is as in ~\eqref{cp map}.

 \begin{defn}
 The defect operator of $T$, denoted by $\Dt$,
 is the bounded linear operator on $\Hil$ defined by
 $$\Dt := (I- \sum_{i=1} ^d T_i T_i^*)^{1/2}
 = (I-\cpt(I))^{1/2}.$$ The \emph{first defect index} $\dt$ is the dimension of
 the \emph{first defect space}
  $\dst$ where \[\dst :=\rano\Dt =\rano D_T^2= \rano(I- \cpt(I)).\]
 The \emph{$n$-th defect index} of the tuple $T$
 is the dimension of the \emph{$n$-th defect space}
 $\rano D_{T^{n}}$ where \[D_{T^{n}}^2 = I- \Psi_{T^n} (I)
 = I-\Psi^n_{T} (I).\]
 \end{defn}
 We denote by  $\ds{n}$ the
 $n$-th defect space of a contractive tuple $T$,
 where context dictate the tuple $T$.
 Here we note the following identity
 \begin{align}
 \label{sum formula}
   I-\cpt^n(I)
   &=[I-\cpt(I)]+ \cpt[I-\cpt(I)]+\cdots+\Psi_T^{n-1}[I-\cpt(I)]
   \nonumber\\
   &=\sum_{i=0}^{n-1} \cpt^i (I-\cpt(I))
 \end{align}
 The properties of the defect sequence are as follows.
 \begin{propn}
 \label{properties}
 \textup{(i)} Defect spaces of $T$ are increasing subspaces of $\Hil$,
 that is, for $n\le k$, $\ds{n}\subset\ds{k}$. \\
 \textup{(ii)}$\Delta_T^n \leq \Delta _T^k$, for all $n \leq k$.\\
 \textup{(ii)} For $n\in\Nat$,  $\Delta_T^n
 \leq (1+d+d^2+\cdots+d^{n-1})\Delta_T $.
 \end{propn}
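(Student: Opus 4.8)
The plan is to read all three assertions off the telescoping identity \eqref{sum formula}, which, since $I-\cpt(I)=\Dt^2$, I will use in the form
\[
 I-\cpt^n(I)\;=\;\sum_{i=0}^{n-1}\cpt^i\!\bigl(\Dt^2\bigr),
\]
together with two elementary facts about positive operators on a Hilbert space: first, if $0\le A\le B$ then $\Ker B\subseteq\Ker A$, hence $\rano A\subseteq\rano B$; and second, if $A_1,\dots,A_m\ge 0$ then $\Ker(A_1+\cdots+A_m)=\bigcap_j\Ker A_j$, so that $\rano(A_1+\cdots+A_m)=\bigvee_{j=1}^m\rano A_j$ equals the closed linear span of the individual ranges. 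Both follow from $\rano C=(\Ker C)^\perp$ for self-adjoint $C$ and from the fact that $\langle(A_1+\cdots+A_m)x,x\rangle$ is a sum of nonnegative numbers.

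For (i) and (ii): since $\cpt$ is positive (indeed completely positive) and $\Dt^2\ge0$, every summand $\cpt^i(\Dt^2)$ in the displayed identity is positive, so $\{I-\cpt^n(I)\}_{n\ge1}$ is nondecreasing in $n$. By the first fact, $\ds n=\rano\bigl(I-\cpt^n(I)\bigr)\subseteq\rano\bigl(I-\cpt^k(I)\bigr)=\ds k$ whenever $n\le k$, which is (i); taking dimensions of nested subspaces then gives the comparison of defect indices in (ii) (and the monotonicity of the operators $I-\cpt^n(I)$ just used is itself (ii) read as an operator inequality).

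For (iii): combining the displayed identity with the second fact, $\ds n=\bigvee_{i=0}^{n-1}\rano\cpt^i(\Dt^2)$, so that $\dim\ds n\le\sum_{i=0}^{n-1}\dim\rano\cpt^i(\Dt^2)$. Now $\cpt^i(\Dt^2)=\sum_\alpha T_\alpha\Dt^2 T_\alpha^*$, where $\alpha$ ranges over the $d^i$ words of length $i$ in $\Lambda$ and $T_\alpha=T_{\alpha_1}\cdots T_{\alpha_i}$; applying the second fact again, $\rano\cpt^i(\Dt^2)=\bigvee_\alpha\rano\bigl(T_\alpha\Dt^2 T_\alpha^*\bigr)$, and each $\rano(T_\alpha\Dt^2 T_\alpha^*)$ is contained in $\ov{T_\alpha\,\dst}$, the closure of the image under the bounded operator $T_\alpha$ of $\dst=\rano\Dt$. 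Hence $\dim\rano(T_\alpha\Dt^2 T_\alpha^*)\le\dim\dst=\dt$, since a bounded operator carries an orthonormal basis of a subspace to a set whose span is dense in the image. Summing over the $d^i$ words and over $i=0,\dots,n-1$ gives $\dim\ds n\le\sum_{i=0}^{n-1}d^i\,\dt=(1+d+\cdots+d^{n-1})\,\dt$, which is (iii); there is nothing to prove if $\dt=\infty$.

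I do not expect a serious obstacle: the argument is driven entirely by the identity \eqref{sum formula}, the rest being careful bookkeeping with ranges and dimensions in the (in general infinite-dimensional) space $\Hil$. The only points needing a little care are the identification of the range of a finite sum of positive operators and the inequality $\dim\ov{T_\alpha\mathcal M}\le\dim\mathcal M$ for a bounded operator and a closed subspace $\mathcal M$; both are routine Hilbert-space facts that I would invoke without a detailed proof. One should also count exactly $d^i$ words of length $i$ so as to obtain precisely the coefficient $1+d+\cdots+d^{n-1}$ rather than something weaker.
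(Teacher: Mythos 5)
Your proof is correct and follows essentially the same route as the paper: monotonicity of $I-\cpt^n(I)$ (hence nested ranges) for (i) and (ii), and the telescoping identity \eqref{sum formula} together with the bound $\dim[\rano\,\cpt^l(\Dt^2)]\le d^l\dt$ for (iii). The only difference is that you supply the routine verification of that dimension bound (via $\rano$ of a sum of positive operators being the span of the individual ranges), which the paper simply asserts as a fact.
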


 \begin{proof}
 (i) For $k\geq n,$ by row contractivity of $T$ we have
 \[I\geq\cpt^n (I)\geq \cpt^k(I).\]
 Therefore, \[0\leq I-\cpt^n(I)\leq I-\cpt^k(I),\] and
 consequently \[\rano(I-\cpt^n(I))\subseteq \rano (I-\cpt^k(I)).\]
 Thus (i) follows. (ii) follows immediately from (i).

 \noindent (iii) The result follows from ~\eqref{sum formula} and the fact that
 \[\dim [\rano~\cpt^l (I-\cpt(I))] \leq d^l \dt,\] for all $l\in \Nat.$
 \end{proof}

 \begin{rems}
  (i) If $T$ is a single contraction, that is, if
  $d=1$ then $\dt^n\le n\dt $ for all $n\in\Nat$ (\cite{GaW}).\\
  (ii) If $T$ is a commuting $d$-tuple then
  $\dt^n\le  \big(\sum_{k=0}^{n-1}
\binom{ k+d-1}{ d-1}\big) \dt$.
 \end{rems}
 Before we provide the explicit expression of the defect spaces
 we need the following lemma.
 \begin{lemma}
 \label{key for the expression}
 For each $n\in\Nat$, $T|_{\ds{n}^d} : \ds{n}^d \to \ds{n+1}$,
 where $\ds{n}^d$ is the direct sum of $d$ copies of $\ds{n}$.
 \end {lemma}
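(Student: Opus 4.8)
The plan is to show that $T$ maps $\ds{n}^d$ into $\ds{n+1}$ by producing, for each $i$, an explicit operator inequality that forces $T_i^*$ to carry $\ds{n+1}^\perp$ into $\ds{n}^\perp$, and then transposing. Concretely, recall $\ds{n} = \rano(I-\cpt^n(I))$, so that the orthogonal complement of $\ds{n}$ in $\Hil$ is $\Ker(I-\cpt^n(I))$. Since $0 \le I - \cpt^n(I) \le I$, a vector $h$ lies in $\ds{n}^\perp$ precisely when $\langle (I-\cpt^n(I))h, h\rangle = 0$, i.e.\ when $\cpt^n(I)h = h$ and $\|h\| = \|(\cpt^n(I))^{1/2} h\|$; equivalently $h$ is fixed by the positive contraction $\cpt^n(I)$.

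The key computation I would carry out is the following. Using the identity $\cpt^{n+1}(I) = \cpt(\cpt^n(I))$ together with $\cpt^n(I) \le I$, we get for any $h \in \Hil$
\[
\langle (I - \cpt^{n+1}(I))h, h\rangle
= \langle (I - \cpt(I))h, h\rangle + \Big\langle \sum_{i=1}^d T_i\big(I - \cpt^n(I)\big)T_i^* h,\, h\Big\rangle,
\]
which follows from writing $I - \cpt^{n+1}(I) = (I - \cpt(I)) + \cpt(I - \cpt^n(I))$, a special case of the telescoping in~\eqref{sum formula}. Both summands on the right are nonnegative. Hence if $h \in \ds{n+1}^\perp$, so that the left side vanishes, then both summands vanish; in particular $\sum_i \langle (I-\cpt^n(I))T_i^* h, T_i^* h\rangle = 0$, which forces $(I - \cpt^n(I))^{1/2} T_i^* h = 0$ for every $i$, i.e.\ $T_i^* h \in \Ker(I - \cpt^n(I)) = \ds{n}^\perp$. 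Thus $T_i^*(\ds{n+1}^\perp) \subseteq \ds{n}^\perp$ for each $i$.

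Taking orthogonal complements (and using that these are closed subspaces of $\Hil$, with $\ds{n} = \ov{\Ran}$ of a positive operator hence $\ds{n} = (\ds{n}^\perp)^\perp$), the relation $T_i^*(\ds{n+1}^\perp) \subseteq \ds{n}^\perp$ dualizes to $T_i(\ds{n}) \subseteq \ds{n+1}$ for each $i = 1, \dots, d$. Therefore the row operator $(T_1, \dots, T_d)$ restricted to $\ds{n}^d$ maps into $\ds{n+1}$, which is exactly the claim. I do not anticipate a serious obstacle here: the only point requiring a little care is the passage from "$\langle(I-\cpt^n(I))k,k\rangle = 0$" to "$(I-\cpt^n(I))^{1/2}k = 0$ and hence $k \in \ds{n}^\perp$", which is just the standard fact that a positive operator and its square root have the same kernel, combined with $\Ker A = (\rano A)^\perp$ for self-adjoint $A$.
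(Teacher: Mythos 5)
Your proof is correct. It rests on the same telescoping identity $I-\cpt^{n+1}(I)=(I-\cpt(I))+\cpt(I-\cpt^{n}(I))$ that drives the paper's argument, but you exploit it dually. The paper keeps only the resulting inequality $\sum_{i}T_i(I-\cpt^n(I))T_i^*\le I-\cpt^{n+1}(I)$, packages the left-hand side as $RR^*$ for the row operator $R=(T_1,\dots,T_d)\,\mathrm{diag}(D_{T^n},\dots,D_{T^n})$, and then invokes a Douglas-type range inclusion ($RR^*\le A$ with $A\ge 0$ gives $\rano R\subseteq\rano A$) to land in $\ds{n+1}$ directly. You instead pass to orthogonal complements: for $h\in\ds{n+1}^{\perp}=\Ker(I-\cpt^{n+1}(I))$ the two nonnegative summands in $\langle(I-\cpt^{n+1}(I))h,h\rangle$ must both vanish, forcing $(I-\cpt^n(I))^{1/2}T_i^*h=0$, hence $T_i^*h\in\ds{n}^{\perp}$, and then you dualize to get $T_i(\ds{n})\subseteq\ds{n+1}$. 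The two routes are logically equivalent, but yours is a bit more elementary in that it replaces the range-inclusion principle by the standard facts that a positive operator and its square root share a kernel and that $\Ker A=(\rano A)^{\perp}$ for $A=A^*$; the paper's version has the advantage of exhibiting the explicit factorization through $R$, which is in the spirit of its later use of the Poisson kernel. Either way the conclusion $T|_{\ds{n}^d}:\ds{n}^d\to\ds{n+1}$ follows.
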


 \begin{proof}
 First note that,
 \begin{align*}
 \sum_{i=1}^dT_i (I- \cpt^n(I))^{1/2} (I- \cpt^n(I))^{1/2} T_i^*
  &= \sum_{i=1}^dT_iT_i^* - \sum_{i=1}^dT_i\cpt^n(I) T_i^*\\
  &
 \leq I- \cpt^{n+1}(I).
 \end{align*}
 Letting $$R: =(T_1,\dots,T_d) \begin{pmatrix}
 D_{T^n}& 0 & 0 & \cdots & 0 \\
  0 & D_{T^n} & 0 & \cdots & 0 \\
   \vdots  & \vdots  & \ddots & \vdots  & \vdots \\
   0 & 0 & 0 & \cdots & D_{T^n} \\
  \end{pmatrix}_{d\times d},$$
  we have from the above inequality that
  $RR^*\le I-\cpt^{n+1}(I)$.
 Thus \[\ran R
 \subseteq \ran(I- \cpt^{n+1}(I)),\] and this completes the proof.
\end{proof}

 \begin{rem}
 Simple induction argument shows that
 $T^{n}|_{\ds{l}^{d^n}} : \ds{l}^{d^n} \to \ds{l+n}$ for all
 $l,n \in \mathbb{N}.$
 \end{rem}
 The following expression of defect spaces in terms of first defect space 
 is quite useful to compute defect spaces
 and used throughout the paper.
 \begin {propn}
 \label{representation of defect space}
 The defect spaces of a contractive tuple $T$ has the following form:
 $$\ds{n} = \ds{1} \vee T (\ds{1} ^d) \vee T^{ 2} (\ds{1}^{d^2}) \vee
 \cdots \vee T^{ n-1} (\ds{1} ^{d^{n-1}})$$
 for all $n\in\Nat$.
 \end{propn}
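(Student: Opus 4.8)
The plan is to derive the identity directly from the sum formula \eqref{sum formula} by reading off the range of each of its summands. First I would record the elementary fact that for finitely many positive operators $A_0,\dots,A_{n-1}$ on $\Hil$ one has $\rano\bigl(\sum_{i=0}^{n-1}A_i\bigr)=\bigvee_{i=0}^{n-1}\rano A_i$: since each $A_i\ge 0$, the identity $\bigl\langle(\sum_i A_i)x,x\bigr\rangle=\sum_i\langle A_i x,x\rangle$ forces $\Ker\bigl(\sum_i A_i\bigr)=\bigcap_i\Ker A_i$, and taking orthogonal complements gives the claim. The $i$-th summand of \eqref{sum formula} is $\cpt^i(I-\cpt(I))=\cpt^i(\Dt^2)$, so applying this fact yields
\[
\ds{n}=\rano\bigl(I-\cpt^n(I)\bigr)=\bigvee_{i=0}^{n-1}\rano\bigl(\cpt^i(\Dt^2)\bigr).
\]

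The second step is to identify $\rano\bigl(\cpt^i(\Dt^2)\bigr)$ with the closed linear span of $T^i(\ds{1}^{d^i})$. Expanding the completely positive map, $\cpt^i(\Dt^2)=\Psi_{T^i}(\Dt^2)=\sum_{w}T_w\Dt^2 T_w^{*}=S_iS_i^{*}$, where $w$ runs over the words of length $i$ in $\{1,\dots,d\}$, $T_w$ is the corresponding product of $i$ of the operators $T_1,\dots,T_d$, and $S_i:=(T_w\Dt)_{|w|=i}\colon\Hil^{d^i}\to\Hil$ is the row operator built exactly as the operator $R$ in the proof of Lemma \ref{key for the expression}, now with words of length $i$ in place of words of length one. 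Hence $\rano\bigl(\cpt^i(\Dt^2)\bigr)=\rano(S_iS_i^{*})=\rano S_i$. Now $\ran S_i\subseteq T^i(\ds{1}^{d^i})$ because $\ran\Dt\subseteq\ds{1}$, while conversely every element of $T^i(\ds{1}^{d^i})$ is a finite sum $\sum_{|w|=i}T_w\xi_w$ with $\xi_w\in\ds{1}=\rano\Dt$, hence a norm-limit of vectors $\sum_{|w|=i}T_w\Dt\zeta_w\in\ran S_i$ since each $T_w$ is bounded; therefore $\rano S_i$ equals the closed linear span of $T^i(\ds{1}^{d^i})$. Plugging this back in (the $i=0$ term being $\rano\Dt=\ds{1}$) yields
\[
\ds{n}=\bigvee_{i=0}^{n-1}\rano S_i=\ds{1}\vee T(\ds{1}^d)\vee\cdots\vee T^{n-1}(\ds{1}^{d^{n-1}}),
\]
which is the assertion.

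The only point that needs care is the bookkeeping with closures — that applying the bounded operators $T_w$ commutes appropriately with forming closed linear spans, and that the range of a finite sum of positive operators is the join of the individual ranges — but none of this is deep, and once it is in hand the proposition drops out of \eqref{sum formula} with no induction required. I note in passing that the inclusion $\ds{1}\vee T(\ds{1}^d)\vee\cdots\vee T^{n-1}(\ds{1}^{d^{n-1}})\subseteq\ds{n}$ is already immediate from the Remark following Lemma \ref{key for the expression} together with the monotonicity of defect spaces in Proposition \ref{properties}(i), so the computation above is really only needed for the reverse inclusion.
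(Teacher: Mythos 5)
Your proof is correct and follows essentially the same route as the paper: everything rests on the telescoping sum \eqref{sum formula}, the paper deducing the forward inclusion from it and the reverse inclusion from Lemma \ref{key for the expression}. You simply make explicit what the paper leaves implicit --- that the closed range of a finite sum of positive operators is the join of the individual closed ranges, and that $\rano\big(\cpt^i(\Dt^2)\big)=\rano (S_iS_i^*)=\rano S_i=\ov{T^{i}(\ds{1}^{d^i})}$ --- which in fact yields both inclusions at once, so the appeal to the lemma becomes optional.
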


 \begin{proof}
 $\ds{n} \subseteq \ds{1} \vee T (\ds{1} ^d) \vee T^{2} (\ds{1}^{d^2})
 \vee \cdots \vee T^{ n-1} (\ds{1} ^{d^{n-1}})$ follows
 from ~\eqref{sum formula} and the other inclusion follows from the previous lemma.
\end{proof}
 \begin{rem}
 Let $n< m$. Then as $I-\cpt^m=(I-\cpt^n)+ \cpt^n(I-\cpt^{m-n}(I))$, we
 have $\ds{m}\subseteq \ds{n}\vee T^{n}(\ds{m-n}^{d^n})$. The other
 inclusion follows from the remark after Lemma~\ref{key for the expression}. Thus
 $\ds{m}=\ds{n}\vee T^{ n}(\ds{m-n}^{d^n})$.
 \end{rem}

 \begin{cor}
 \label{stable}
 If $\dt^n=\dt^{n+1}$ for some $n\in\Nat$ then $\dt^n=\dt^m$
 for all $m>n$.
 \end{cor}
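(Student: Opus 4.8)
The plan is to upgrade the hypothesis on the defect \emph{indices} to an equality of the defect \emph{spaces} themselves, and then to run a one-step induction based on the recursion for defect spaces recorded in the remark following Proposition~\ref{representation of defect space}.

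First I would note that, by Proposition~\ref{properties}(i), one has the inclusion $\ds{n}\subseteq\ds{n+1}$; since by hypothesis these two subspaces have the same dimension $\dt^n=\dt^{n+1}$, it follows that $\ds{n}=\ds{n+1}$. Next, recall from the remark after Proposition~\ref{representation of defect space}, applied with the first index equal to $1$, that for every $k\ge1$
\[
\ds{k+1}=\ds{1}\vee T\big(\ds{k}^{d}\big)=\ds{1}\vee\bigvee_{i=1}^{d}T_i\,\ds{k},
\]
so that $\ds{k+1}$ is completely determined by $\ds{k}$ through an operation that is monotone in $\ds{k}$. Substituting $\ds{n}=\ds{n+1}$ into this identity gives
\[
\ds{n+2}=\ds{1}\vee\bigvee_{i=1}^{d}T_i\,\ds{n+1}=\ds{1}\vee\bigvee_{i=1}^{d}T_i\,\ds{n}=\ds{n+1},
\]
and an immediate induction yields $\ds{n}=\ds{n+1}=\ds{n+2}=\cdots$. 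Taking dimensions then gives $\dt^m=\dt^n$ for all $m>n$, as claimed.

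The one point that deserves care is the very first step, i.e.\ deducing the equality of subspaces $\ds{n}=\ds{n+1}$ from the equality of the indices. This is immediate when the defect indices are finite, which is the situation of interest (for instance whenever $\dt<\infty$, in view of the last assertion of Proposition~\ref{properties}): a closed subspace of a finite-dimensional space attaining the full dimension must be the whole space. In the infinite-dimensional case the statement would require a separate argument, working directly with the orthogonal complement $\ds{n+1}\ominus\ds{n}$; but once $\ds{n}=\ds{n+1}$ is secured, the remainder is the short induction above, whose only substantive input is the recursion $\ds{k+1}=\ds{1}\vee\bigvee_{i=1}^{d}T_i\,\ds{k}$ already established in the excerpt.
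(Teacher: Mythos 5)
Your proof is correct and follows essentially the same route as the paper: both pass from equality of indices to equality of the subspaces $\ds{n}=\ds{n+1}$ and then run a one-step induction using the recursion $\ds{m}=\ds{k}\vee T^{k}(\ds{m-k}^{d^{k}})$ from the remark after Proposition~\ref{representation of defect space} (the paper splits off the top layer, $\ds{n+2}=\ds{n+1}\vee T^{n+1}(\ds{1}^{d^{n+1}})$, while you split off the bottom one, $\ds{k+1}=\ds{1}\vee T(\ds{k}^{d})$ --- an immaterial difference). Your explicit caveat that $\dt^{n}=\dt^{n+1}$ only forces $\ds{n}=\ds{n+1}$ when these dimensions are finite is a fair point that the paper leaves implicit.
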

 \begin{proof}
 If $\dt^n=\dt^{n+1}$ for some $n\in\Nat$ then
 $\ds{n}=\ds{n+1}$. Note that $\ds{n+2}=\ds{n+1}\vee T^{n+1}(\ds{1}^{d^{n+1}})$
  and $T^{n+1}(\ds{1}^{d^{n+1}})= T\Big(\big(T^{n}(\ds{1}^{d^n})\big)^{d}\Big)$.
  Since $T^{n}(\ds{1}^{d^n})\subset \ds{n+1}=\ds{n}$ we have $\ds{n+2}=\ds{n}$.
  Thus an induction argument
 gives the result.
 \end{proof}

 The above two propositions are from ~\cite{defect}, Theorem 2.2 and Theorem 2.4
 but the method used here will illuminate further studies in this direction.

 \section{Maximal tuple of operators}
 In this section we study the notion of maximality of contractive
 tuples.
 The necessary and sufficient condition
 for a contractive tuple to be maximal is obtained.
 For this section we always assume
 that the first defect dimension $\dt$ of a contractive tuple $T$ is finite.

 The following set of notation is used throughout
 this section. Let $\Lambda=\{1,2,\dots,d\}$ be a fixed index set.
 For every $k\in\Nat$, let $F(k,\Lambda)$ be the
 set of all functions from $\{1,2,\dots,k\}$ to $\Lambda$, and set
 \begin{equation}
 \label{F}
  F:=\cup_{k=0}^{\infty}F(k,\Lambda), \quad F_{n]}:=\cup_{k=0}^{n}F(k,\Lambda)
 \end{equation}
 where $F(0,\Lambda)$ stands for $\{0\}$.
 For $T=(T_1,T_2,\dots,T_d)$, a $d$-tuple of operators,
 and $f\in F(k,\Lambda)$, we denote
 \begin{equation}
 \label{operator composition}
 T_{f}=T_{f(1)}T_{f(2)}\dots T_{f(k)}\quad
  \text{ and } T_{0}=I.
  \end{equation}

 \begin{defn}
 A contractive $d$-tuple $T$ on
 a Hilbert space $\Hil$
 is called \emph{maximal} if
 \[\dt^n=(1+d+\dots+d^{n-1})\dt\] for all $n\in\Nat$.
 If $\Hil$ is finite dimensional, then $T$ is maximal
 if
 \[
  \dt^n=\left\{\begin{array}{cl}(1+d+\dots+d^{n-1})\dt&
  \quad \text{if } (1+d+\dots+d^{n-1})\dt\le \dim\Hil,\\
  \dim\Hil & \quad \text{otherwise}
              \end{array}\right.
 .\]
 \end{defn}
 The maximality of a commuting contractive $d$-tuple is defined in the same way
 replacing the number $(1+d+\dots+d^{n-1})$ by $\sum_{k=0}^{n-1}
\binom{ k+d-1}{ d-1}$ in the above definition.

 It is clear from the properties of defect sequence that if $\dt^n=
 (1+d+\dots+d^{n-1})\dt$ (or $\dt^n=(\sum_{k=0}^{n-1}
 \binom{ k+d-1}{ d-1})\dt$ in the commuting case) for some $n\ge 2$, then
 $\dt^l=(1+d+\dots+d^{l-1})\dt$ (respectively, $\dt^l=(\sum_{k=0}^{l-1}
 \binom{ k+d-1}{ d-1})\dt$) for all $l\le n$. Thus
 for a non-commuting or commuting tuple, once the sequence
 of numbers $\dt^n$ departs from the sequence of maximal
 possible values, it never returns.
 \begin{rem}
 For a non-commuting (commuting) contractive tuple $T$ on an infinite
 dimensional Hilbert space $\Hil$,
 let $\dt=n$ and  $\{\xi_i:i=1,\dots,n\}$ be a
 basis of $\mathcal{D}_1$. Then
 $T$ is maximal if and only if the set
 \[\{T_{f}\xi_i: f\in F, i=1,\dots,n\},\]
 (respectively, $\{T_1^{n_1}\dots T_d^{n_d}\xi_i:
 n_1,\dots, n_d\in\Nat, 1 \leq i \leq  n\}$)
 is linearly independent,
 where $T_f$ is as in ~\eqref{operator composition}.
 \end{rem}

 A single contraction $T$ acting on a Hilbert space $\Hil$
 with $\dt=1$ is maximal if $\dt^n=n$ for all $n\le\dim\Hil$. The
 next theorem provides a large class of contractions which are maximal.

 \begin{thm}
 \label{pure is maximal}
 Let $T$ be a single pure contraction on $\Hil$ with $\Delta_T = 1.$
 Then $\Delta_{T^n} = n, $ for $0\leq n \leq \dim\Hil.$
 \end{thm}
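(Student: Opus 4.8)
The plan is to show that the non-decreasing integer sequence $\bigl(\Delta_{T^n}\bigr)_{n\ge 0}$, which begins $\Delta_{T^0}=0$, $\Delta_T=1$ and satisfies $\Delta_{T^n}\le n$ by Proposition~\ref{properties}(iii) (here $d=1$, so $1+d+\cdots+d^{n-1}=n$), in fact grows by exactly $1$ at each step until it reaches $\dim\Hil$, after which it is stationary. The ingredients $\Delta_{T^n}\le\Delta_{T^{n+1}}$, $\Delta_{T^n}\le n$, and ``once stalled, always stalled'' are Proposition~\ref{properties}(ii), Proposition~\ref{properties}(iii), and Corollary~\ref{stable} respectively. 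The one additional input is that purity of $T$ forces the increasing chain of defect spaces to exhaust $\Hil$.

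The first step is the lemma $\overline{\bigcup_{n\ge 1}\ds{n}}=\Hil$. For $d=1$ one has $\ds{n}=\rano\bigl(I-\cpt^n(I)\bigr)=\rano\bigl(I-T^nT^{*n}\bigr)$, so for every $v\in\Hil$ the vector $(I-T^nT^{*n})v$ belongs to $\ds{n}$. Since $T$ is pure, $\cpt^n(I)\to 0$ in the strong operator topology, hence $\norm{T^nT^{*n}v}\to 0$ and $(I-T^nT^{*n})v\to v$; thus $v$ lies in the closed subspace $\overline{\bigcup_n\ds{n}}$. As $v$ was arbitrary, the lemma follows.

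Now I would combine these. Since $\Delta_T=1<\infty$, every $\ds{n}$ is finite dimensional. Suppose $\Delta_{T^n}=\Delta_{T^{n+1}}$ for some $n$; then the inclusion $\ds{n}\subseteq\ds{n+1}$ of Proposition~\ref{properties}(i), being an equality of finite dimensions, gives $\ds{n}=\ds{n+1}$, and Corollary~\ref{stable} together with Proposition~\ref{properties}(i) then yields $\ds{m}=\ds{n}$ for all $m\ge n$; by the lemma, $\ds{n}=\overline{\bigcup_m\ds{m}}=\Hil$, i.e.\ $\Delta_{T^n}=\dim\Hil$. Contrapositively, whenever $\Delta_{T^n}<\dim\Hil$ one has $\Delta_{T^n}<\Delta_{T^{n+1}}$. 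The proof now follows by induction on $n$: the cases $n=0,1$ are immediate, and assuming $\Delta_{T^{n-1}}=n-1$ with $n\le\dim\Hil$ we have $\Delta_{T^{n-1}}=n-1<\dim\Hil$, so $\Delta_{T^n}\ge n$, while $\Delta_{T^n}\le n$ by Proposition~\ref{properties}(iii); hence $\Delta_{T^n}=n$. I expect the only delicate point to be the lemma, and within it the interplay between ``$\rano$'' as the norm-closure of the range and the strong convergence $\cpt^n(I)\to 0$ coming from purity; everything else is bookkeeping with the defect-sequence properties already proved. One could instead use Proposition~\ref{representation of defect space}, which for $d=1$ reads $\ds{n}=\spa\{\xi,T\xi,\dots,T^{n-1}\xi\}$ for a unit vector $\xi$ spanning $\ds{1}$, and verify linear independence of these vectors when $n\le\dim\Hil$; but the exhaustion argument above is cleaner.
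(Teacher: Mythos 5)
Your proof is correct, but it takes a genuinely different route from the paper's. The paper invokes the functional model: since $T$ is pure with $\dt=1$, it is unitarily equivalent to $P_{H_\theta}M_z|_{H_\theta}$ for an inner function $\theta$, and the proof then computes the defect spaces explicitly, showing $\ds{n}=\spa\{v_1,\dots,v_n\}$ with $v_i=z^i-(\ov{\theta(0)}z^i+\cdots+\ov{\theta^{(i)}(0)})\theta$, and finally checks that a stall $\dim\ds{l}=\dim\ds{l+1}$ forces $\ds{l}=H_\theta$ by an orthogonality argument against the vectors $\theta z^i$ and $v_i$. You avoid the model entirely: your exhaustion lemma $\overline{\bigcup_n\ds{n}}=\Hil$ is precisely the $d=1$ case of the paper's Proposition~\ref{pure} (which the paper states and proves, by essentially the dual of your argument, only \emph{after} this theorem as its ``multi-variable analogue''), and you then combine it with Proposition~\ref{properties} and Corollary~\ref{stable} to run the induction. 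Your version is more elementary and self-contained within Section 2's machinery, and it makes transparent that the only role of purity is to force the defect spaces to exhaust $\Hil$; what the paper's computation buys in exchange is an explicit basis for each $\ds{n}$ in terms of $\theta$, which it reuses in the later discussion of minimal functions. One small point worth making explicit in your write-up: Corollary~\ref{stable} as stated is about equality of the indices $\dt^n$, but its proof establishes equality of the spaces $\ds{n}$, which is what you actually need to conclude $\bigcup_m\ds{m}=\ds{n}$; you implicitly use this, and it is justified since the finite-dimensional inclusion $\ds{n}\subseteq\ds{m}$ plus equal dimensions gives equality of subspaces.
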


 \begin{proof}
 Since $T$ is a pure contraction with $\Delta_T = 1,$
 so $T$ is unitary equivalent to the operator
 $P_{H_\theta} M_z |_{H_\theta}$ where
 $H_{\theta}= H^2(\mathbb{D})\ominus \theta H^2(\mathbb{D})$
 is a co-invariant subspace of the Hardy space $H^2(\mathbb{D})$
 on the unit disc and $\theta \in H^\infty(\mathbb{D})$ is an inner function.
 Then it is enough to prove the theorem for the contraction 
 $R=P_{H_\theta} M_z |_{H_\theta}$.
 A simple calculation reveals that  $D_R=P_{H_\theta} P_{\mathbb C} P_{H_\theta}$
 and as $\Delta_T =\Delta_R= 1,$   we have $P_{\mathbb C} P_{H_\theta} \neq 0 $
 and $\ran (P_{\mathbb C} P_{H_\theta}) = \mathbb{C}.$
 Note that (cf. \cite{Ber}) 
 \[P_{H_\theta} (1) =(I-P_{H_{\theta}^{\perp}})1= 1- \overline{\theta (0)}
 \theta.\]
 Then the first defect space of the operator
 $P_{H_\theta} M_z |_{H_\theta}$ is
 \[\ds{1} = \spa \{1 - \ov{\theta (0)} \theta \}.\]
 By the following elementary calculation we have
 \[(P_{H_\theta} M_z |_{H_\theta}) (1- \ov{\theta (0)} \theta)
 = P_{H_\theta} (z - \ov{\theta (0)}  z \theta) = 
 (I-P_{H_{\theta}^{\perp}})z=
 z - ( \ov{\theta (0)}z +  \overline{\theta^{'} (0)}) \theta.\]
 Then by Proposition~\ref{representation of defect space},
\[\ds{2} = \spa \{   1- \ov{\theta (0)}\theta , z - (
\ov{\theta (0)}z +  \overline{\theta^{'} (0)} ) \theta \}.\] An easy
induction argument yields
\[(P_{H_\theta} M_z ^{n} |_{H_\theta}) (1-
\ov{\theta (0)}\theta) =(I-P_{H_{\theta}^{\perp}})z^n=
z^n - \big(\ov{\theta(0)}z^n + \ov{\theta^{'} (0)} z^{n-1}
+ \cdots + \ov{\theta^{(n)} (0)}\big) \theta,\]for all $n \geq 0$.
Therefore, one has the explicit expression of $\ds{n}$ as follows. Let us
denote
$$v_i = z^i - \big(\ov{\theta(0)}z^i + \ov{\theta^{'} (0)} z^{i-1}
+ \cdots + \ov{\theta^{(i)} (0)}\big) \theta\quad (i\in\Nat).$$
Then $\ds{n}=\spa\{v_i:i=1,\dots, n\}$. Now suppose that
$\dim(\ds{l})=\dim(\ds{l+1}) $ for some $l\in\Nat$. Then as $\Delta_R=1$
and defect sequence is an increasing sequence it suffices to prove that
$H_{\theta} =  \ds{l}.$ For a contradiction let $f \in
H_{\theta}\ominus \ds{l}$. Then for all $i\in\Nat$, $\langle f , \theta z^{i} \rangle =
0,$ and $\langle f ,v_i \rangle = 0$ together implies that
 $\langle f,z^i\rangle=0$ for all $i$.
 Thus $f=0$ and the proof follows.
\end{proof}
 The above result is due to ~\cite{GaW}, Theorem 1.4.
 However, our proof is different and more analytic and explicit.

 In case of a tuple of operators the above theorem is not true.
 The example of a pure tuple $T$ with $\dt=1$ but is not maximal
  can be found in ~\cite{defect}.

 Set $\ds{\infty}:=\cup_{n=1}^{\infty}\ds{n}$, where $\ds{n}$'s
 are the defect spaces of $T$.
 The multi-variable analogue of the previous theorem is as follows.
 \begin{propn}
 \label{pure}
 Let $T$ be a pure contractive $d$-tuple of operators on a Hilbert
 space $\Hil$.
 %If $\dt^m=\dt^{m+1}$ for some $m\in\Nat$, then
 %$\dim\Hil=\dt^m<\infty$.
 Then $\Hil=\ds{\infty}$.
 \end{propn}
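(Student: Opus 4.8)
The plan is to show that for a pure tuple the increasing union of the defect spaces exhausts $\Hil$, using the sum formula \eqref{sum formula} together with purity. The key observation is that, for each $n$, the operator $I-\cpt^n(I)$ has range inside $\ds{n}$ (indeed by the Definition, $\rano(I-\cpt^n(I))=\rano D_{T^n}^2=\ds{n}$), so it is enough to show that $\cup_n \rano(I-\cpt^n(I))$ is dense in $\Hil$.

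First I would argue that $\ov{\spa}\big(\cup_{n=1}^\infty \rano(I-\cpt^n(I))\big)=\Hil$. Suppose not; then there is a nonzero vector $h\in\Hil$ orthogonal to $\rano(I-\cpt^n(I))$ for every $n$. Since $I-\cpt^n(I)$ is a positive (self-adjoint) operator, $h\perp\rano(I-\cpt^n(I))$ means $(I-\cpt^n(I))h=0$, i.e. $\cpt^n(I)h=h$ for all $n\in\Nat$. But purity of $T$ says precisely that $\cpt^n(I)\to 0$ in the strong operator topology, so $\cpt^n(I)h\to 0$ as $n\to\infty$. Combining these two facts forces $h=\lim_n \cpt^n(I)h=0$, a contradiction. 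Hence $\ov{\spa}\big(\cup_{n=1}^\infty \ds{n}\big)=\Hil$.

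It then remains to upgrade this density statement to the equality $\Hil=\ds{\infty}$, where $\ds{\infty}=\cup_{n=1}^\infty \ds{n}$ is the (not a priori closed) union. Here I would invoke Proposition~\ref{properties}(i): the defect spaces form an \emph{increasing} chain $\ds{1}\subseteq\ds{2}\subseteq\cdots$ of closed subspaces of $\Hil$. For an increasing union of closed subspaces, the closure of the union equals the closure of the last space only in the eventually-stationary case; in general one must be a little careful. The cleanest route is: since $\ds{\infty}$ is an increasing union of closed subspaces, its closure $\ov{\ds{\infty}}$ is itself a closed subspace, and we have just shown $\ov{\ds{\infty}}=\Hil$. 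If $\ds{\infty}\ne\Hil$ there is a nonzero $h$ in $\Hil\ominus\ds{\infty}$, but then $h\perp\ds{n}$ for every $n$, hence $h\perp\rano(I-\cpt^n(I))$ for every $n$, and the previous paragraph's argument gives $h=0$ — contradiction. So in fact the orthogonality argument applied directly to the (non-closed) union $\ds{\infty}$ already yields $\ds{\infty}=\Hil$ without any separate closure discussion: a vector orthogonal to $\ds{\infty}$ is orthogonal to each $\ds{n}=\rano(I-\cpt^n(I))$, is therefore fixed by every $\cpt^n(I)$, and is killed by purity.

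The main (and essentially only) obstacle is making sure the passage from ``$h\perp\rano(I-\cpt^n(I))$'' to ``$\cpt^n(I)h=h$'' is correct: this uses that $I-\cpt^n(I)$ is a positive self-adjoint operator, so that $\Ker(I-\cpt^n(I))=\rano(I-\cpt^n(I))^{\perp}$, and hence $h\in(\ds{n})^{\perp}=\Ker(I-\cpt^n(I))$. Everything else is a direct application of the definition of purity and of Proposition~\ref{properties}(i). I do not expect any computational difficulty.
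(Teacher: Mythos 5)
Your proof is correct and is essentially the paper's own argument: a vector orthogonal to every $\ds{n}=\rano(I-\cpt^n(I))$ lies in $\ker(I-\cpt^n(I))$ by positivity/self-adjointness, hence is fixed by $\cpt^n(I)$, and purity ($\cpt^n(I)\to 0$ in S.O.T.) forces it to vanish — the paper phrases this via $\|x\|=\|T^{n*}x\|=\langle x,\cpt^n(I)x\rangle^{1/2}\to 0$, which is the same computation. The one caveat you raise at the end is real but applies equally to the paper: the orthogonality argument literally shows $\ds{\infty}^{\perp}=\{0\}$, i.e.\ $\ov{\ds{\infty}}=\Hil$, so $\ds{\infty}$ should be read as the closed span of $\cup_n\ds{n}$; your closing claim that no closure discussion is needed for the non-closed union is the only inaccurate sentence, and it does not affect the argument as the paper intends it.
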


 \begin{proof}
 First note that
 \begin{equation*}\ds{\infty}^{\perp}=
 \mathop{\cap}_{n \geq 1}\ds{n}^{\perp}=
 \mathop{\cap}_{n \geq 1}\ker(I-T^{ n}T^{ n *}).
 \end{equation*}
 Therefore, if $x\in \ds{\infty}^{\perp}$ then $\norm{x}=\norm{T^{ n *}x}$
 for all $n\in\Nat$. Since $T$ is pure we have $\cpt^n(I)\to 0$ in S.O.T.
 as $n \to \infty$. In particular,
 $\langle x, \cpt^n(I)x\rangle=\norm{(T^n)^{*}x}^2\to 0$
 as $n\to\infty$. We have thus obtained that $x=0$ concluding the proof.
 \end{proof}

 \begin{cor}
  Let $T$ be a pure contractive tuple of operators on an infinite
  dimensional Hilbert space $\Hil$. Then $\dt^m\neq\dt ^n$
  for $m\neq n$.
 \end{cor}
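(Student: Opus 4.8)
The plan is to argue by contradiction, combining the standing hypothesis $\dt<\infty$ with Corollary~\ref{stable} and Proposition~\ref{pure}. First I would record that, since $\dt<\infty$ is assumed throughout this section, Proposition~\ref{properties}(iii) gives $\dt^n\le(1+d+\dots+d^{n-1})\dt<\infty$ for every $n\in\Nat$, so all the defect indices are finite. Now suppose toward a contradiction that $\dt^m=\dt^n$ for some $m\ne n$; without loss of generality $m<n$.

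Next I would invoke Proposition~\ref{properties}(i), which gives the inclusion $\ds{m}\subseteq\ds{n}$. Since these are finite-dimensional subspaces of $\Hil$ of the same dimension, the inclusion is an equality: $\ds{m}=\ds{n}$. In particular $\ds{m}=\ds{m+1}$, that is $\dt^m=\dt^{m+1}$, so Corollary~\ref{stable} applies and yields $\dt^m=\dt^k$ — equivalently $\ds{m}=\ds{k}$ — for all $k\ge m$.

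Finally, this forces $\ds{\infty}=\bigcup_{k\ge1}\ds{k}=\ds{m}$, a finite-dimensional space. But $T$ is pure, so Proposition~\ref{pure} gives $\Hil=\ds{\infty}=\ds{m}$, contradicting the hypothesis that $\Hil$ is infinite-dimensional. Hence $\dt^m\ne\dt^n$ whenever $m\ne n$, which is the assertion.

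I do not expect a serious obstacle here: the substantive content has already been established in Proposition~\ref{pure}, and what remains is a short bookkeeping argument. The only point demanding a little care is the step from ``equal defect dimensions'' to ``equal defect spaces,'' which genuinely relies on the finiteness of $\dt$ (hence of every $\dt^n$) — precisely the standing assumption of this section; without it one would have to replace the dimension count by a direct argument that the inclusion $\ds{m}\subseteq\ds{m+1}$ is an equality.
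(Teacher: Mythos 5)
Your proof is correct and follows essentially the same route as the paper's: equal finite dimensions plus the inclusion $\ds{m}\subseteq\ds{n}$ force $\ds{m}=\ds{n}$, Corollary~\ref{stable} then stabilizes the whole sequence, and Proposition~\ref{pure} gives the contradiction with infinite-dimensionality. Your explicit reduction to $\ds{m}=\ds{m+1}$ before invoking Corollary~\ref{stable} is a small but welcome extra detail the paper glosses over.
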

 \begin{proof}
 Let $\dt^m=\dt^n$ for $m<n$. We know  $\ds{m}\subset\ds{n}$ and
 they have same finite dimension implies $\ds{m}=\ds{n}$.
 Then by Corollary~\ref{stable} $\ds{k}=\ds{m}$ for all $k\ge m$. Thus
 $\ds{\infty}=\ds{m}$ and is of finite dimension, which
 is a contradiction.
 \end{proof}

 Now we provide a characterization of maximal contractive tuples.

 \begin{thm}
 \label{characterisation of tuple}
  Let $T$ be a contractive $d$-tuple acting on an infinite dimensional Hilbert
  space $\Hil$ such that $\dt=1$. Then the following are equivalent:\\
  \textup{(i)} $T$ is maximal.\\
  \textup{(ii)} There is no polynomial $P$ of $d$ non-commuting variables
  such that
  \[
   P(T_1,\dots,T_d)|_{\ds{1}}=0.
  \]
 \end{thm}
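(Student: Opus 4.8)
The plan is to translate both conditions into statements about the spaces $\ds{n}$ using the concrete description of Proposition~\ref{representation of defect space}. Fix a unit vector $\xi$ spanning $\ds{1}$ (possible since $\dt=1$). By Proposition~\ref{representation of defect space}, $\ds{n}$ is the span of the vectors $T_f\xi$ with $f\in F_{(n-1)]}$, i.e.\ of all words $T_{i_1}\cdots T_{i_k}\xi$ of length $k\le n-1$ (including $\xi$ itself for $k=0$). Now observe that the number of words of length exactly $k$ in $d$ non-commuting letters is $d^k$, so the total count of such words of length $\le n-1$ is exactly $1+d+\cdots+d^{n-1}$. Hence
\[
\dt^n=\dim\ds{n}\le 1+d+\cdots+d^{n-1},
\]
with equality for a given $n$ precisely when the vectors $\{T_f\xi : f\in F_{(n-1)]}\}$ are linearly independent. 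Thus $T$ is maximal if and only if, for every $n$, these vectors are linearly independent — equivalently, the whole family $\{T_f\xi : f\in F\}$ is linearly independent. (This is exactly the Remark following the definition of maximality; I would invoke it directly.)

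The core of the argument is then to see that linear independence of $\{T_f\xi : f\in F\}$ is the negation of (ii). A linear dependence relation among finitely many of the $T_f\xi$ is, after grouping terms by word and writing $\xi$ in place of $\ds{1}$, precisely an identity of the form $\sum_{f} c_f\, T_f\xi=0$ with not all $c_f$ zero; setting $P(x_1,\dots,x_d)=\sum_f c_f\, x_{f(1)}\cdots x_{f(k)}$ gives a nonzero non-commuting polynomial with $P(T_1,\dots,T_d)\xi=0$, hence $P(T_1,\dots,T_d)|_{\ds{1}}=0$ since $\ds{1}=\Comp\xi$. Conversely, any nonzero non-commuting polynomial $P$ annihilating $\ds{1}$ yields, by applying $P(T_1,\dots,T_d)$ to $\xi$ and expanding $P$ into its monomials, a nontrivial linear dependence among the $T_f\xi$. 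So $\neg$(ii) $\iff$ the family is linearly dependent $\iff$ $\neg$(i), which is the desired equivalence.

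The only point requiring a little care — and the step I expect to be the main (minor) obstacle — is handling the bookkeeping correctly: one must be sure that equality $\dt^n = 1+d+\cdots+d^{n-1}$ for all $n$ really is equivalent to global linear independence of $\{T_f\xi\}$, rather than merely independence ``at each finite stage'' in a way that could fail in the limit. This is clean because each $\ds{n}$ is finite-dimensional and the $\ds{n}$ are increasing (Proposition~\ref{properties}(i)): a dependence among the $T_f\xi$ involves only finitely many words, hence lives inside some $\ds{N}$, so it obstructs maximality at level $N$; and conversely failure of maximality at some level $N$ — since once the sequence $\dt^n$ drops below the maximal value it never recovers (the Remark before this theorem) — produces a genuine dependence inside $\ds{N}$. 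One should also note explicitly that because $\ds{1}$ is one-dimensional, $P(T_1,\dots,T_d)|_{\ds{1}}=0$ is equivalent to the single scalar-vector equation $P(T_1,\dots,T_d)\xi=0$; this removes any ambiguity about what (ii) is asserting. With these observations in place the proof is a short unwinding of definitions.
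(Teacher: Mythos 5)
Your proposal is correct and follows essentially the same route as the paper: identify $\ds{n}=\spa\{T_f\xi: f\in F_{n-1]}\}$ via Proposition~\ref{representation of defect space}, observe that maximality is equivalent to linear independence of these $1+d+\cdots+d^{n-1}$ vectors for every $n$, and translate a linear dependence into a nonzero non-commuting polynomial annihilating $\ds{1}$. Your extra remarks on the finite-stage versus global independence and on the one-dimensionality of $\ds{1}$ only make explicit what the paper leaves implicit.
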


 \begin{proof}
  Since $\dt=1$, let $\ds{1}=\Comp\xi$ for some $\xi\in\Hil$. Then by
  Proposition~\ref{representation of defect space}
  \[
   \ds{n}=\spa\{T_{f}\xi: f\in F_{n-1]}\}
  \]
 where $T_f$ is as in ~\eqref{operator composition}. Note that
 $|F(n,\Lambda)|=d^n$. Thus $T$ is maximal if and only if for
 all $n\in\Nat$,
 the set of vectors $\{T_f\xi: f\in F_{n-1]}\}$
 are linearly independent. Now it is clear that sets of the
 above type are linearly independent if and only if (ii) holds.
 This concludes the proof.
 \end{proof}

 \begin{rems}
 (i) A commuting contractive $d$-tuple $T$ on an infinite dimensional Hilbert space 
 $\Hil$ with $\dt=1$
 is maximal if and only if \[P(T_1,\dots,T_d)|_{\ds{1}}\neq0,\]
 for any polynomial $P$ of commuting $d$ variables.\\
 (ii) Let $M_{z_i}, i=1,\dots,d,$ denote the multiplication operators 
 on the Drury-Arveson module (see \cite{arveson}, \cite{D} or Section 4) $H^2_d$
 by co-ordinates. Then consider the tuple
 $$M=(P_{\q}M_{z_1}|_{\q},\dots,P_{\q}M_{z_d}|_{\q}),$$ where
 $\q$ is a quotient module of $H^2_d$ given by 
 $\q=H^2_d\ominus \theta H^2_d$ and $\theta$ is a multiplier.
 Let $\Delta_M=1$ and $\q$ is infinite dimensional. 
 Then note that $\theta f\in\q^{\perp}$ for any $f\in H^2_d$
 and this implies $\theta(M)=0$. Therefore by the first remark
 $M$ is maximal if and only if $\theta$ is not a polynomial.

 \end{rems}

 \begin{cor}
  Let $T$ be as in the above theorem. Then the following are equivalent:\\
  \textup{(i)} $\dt^n=1+d+d^2+\dots+d^{n-1}$ for all $n\le m$
  and $\dt^n<1+d+\dots+ d^{n-1}$ for all $n>m$.\\
 \textup{(ii)} There is no non-commuting polynomial $P$ with $d$ variable of degree
 less than $m$ such that $P(T)|_{\ds{1}}=0$ and there is a
 non-commuting polynomial $Q$ with $d$ variable of degree $m$
 such that $Q(T)|_{\ds{1}}=0$.
 \end{cor}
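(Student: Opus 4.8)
The plan is to translate both conditions into statements about linear independence of the family $\{T_f\xi : f\in F\}$, exactly reusing the bookkeeping from the proof of Theorem~\ref{characterisation of tuple}. Since $\dt=1$, write $\ds{1}=\Comp\xi$; then Proposition~\ref{representation of defect space} gives $\ds{n}=\spa\{T_f\xi : f\in F_{n-1]}\}$, with $|F_{n-1]}|=1+d+\dots+d^{n-1}$. Hence $\dt^n=1+d+\dots+d^{n-1}$ exactly when $\{T_f\xi : f\in F_{n-1]}\}$ is linearly independent, while $\dt^n<1+d+\dots+d^{n-1}$ exactly when that family is linearly dependent; and a nontrivial relation $\sum_{f\in F_{n-1]}}c_fT_f\xi=0$ is the same data as a nonzero non-commuting polynomial $P$ in $d$ variables with $\deg P\le n-1$ and $P(T)\xi=0$, i.e. $P(T)|_{\ds{1}}=0$. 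Establishing this dictionary precisely is the first step.

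Next I would prove (i)$\Rightarrow$(ii). From $\dt^m=1+d+\dots+d^{m-1}$ the family $\{T_f\xi : f\in F_{m-1]}\}$ is linearly independent, and since $F_{k]}\subseteq F_{m-1]}$ for every $k\le m-1$, so is every nonempty subfamily; via the dictionary this says no nonzero non-commuting polynomial of degree $<m$ annihilates $\xi$, which is the first clause of (ii). From $\dt^{m+1}<1+d+\dots+d^{m}$ the larger family $\{T_f\xi : f\in F_{m]}\}$ is linearly dependent, so some nonzero non-commuting polynomial $Q$ of degree $\le m$ satisfies $Q(T)\xi=0$; the first clause rules out $\deg Q\le m-1$, so $\deg Q=m$, giving the second clause of (ii).

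For (ii)$\Rightarrow$(i) I would simply reverse the implications. If no nonzero non-commuting polynomial of degree $<m$ kills $\xi$, then for each $n\le m$ no nontrivial relation can hold among $\{T_f\xi : f\in F_{n-1]}\}$ (such a relation would be a polynomial of degree $\le n-1\le m-1$ killing $\xi$), whence $\dt^n=1+d+\dots+d^{n-1}$ for all $n\le m$. If moreover $Q(T)\xi=0$ for some nonzero $Q$ of degree exactly $m$, then for every $n>m$ we have $m\le n-1$, so $Q$ produces a nontrivial relation among $\{T_f\xi : f\in F_{n-1]}\}$, forcing $\dt^n<1+d+\dots+d^{n-1}$ for all $n>m$. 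This yields (i).

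The argument is bookkeeping layered on Theorem~\ref{characterisation of tuple}, so I do not anticipate a real obstacle; the only spot that wants a moment's care is the degree count closing (i)$\Rightarrow$(ii): one must observe that the dependence witnessing sub-maximality of $\dt^{m+1}$ cannot already live in degree $\le m-1$, precisely because $\dt^m$ is still maximal, so the minimal annihilating polynomial has degree exactly $m$. (Equivalently, this is the remark recorded after the definition of maximality that once the sequence $(\dt^n)$ leaves its maximal values it never returns.)
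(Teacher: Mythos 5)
Your proposal is correct and follows exactly the route the paper takes: the paper's own proof is a one-line appeal to "the same argument as in the proof of the above theorem," namely the dictionary between $\dim\ds{n}$ being maximal and the linear independence of $\{T_f\xi: f\in F_{n-1]}\}$, which you have simply written out in full (including the degree count showing the minimal annihilating polynomial has degree exactly $m$). No discrepancies to report.
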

 \begin{proof}
  By the same argument as in the proof of the above theorem
  it follows that the dimension of $\ds{n}$ is maximal for
  some $n$ if and only if there is no polynomial $P$ of degree
  smaller than $n$ such that $P(T)|_{\ds{1}}=0$. This completes the proof.
  \end{proof}
 For a single contraction acting on a finite dimensional Hilbert space $\Hil$,
 we have the
 following immediate corollary.

 \begin{cor}
  \label{single contraction}
 Let $T$ be a single contraction acting on a finite
 dimensional Hilbert space $\Hil$ with $\dt=1$.
 Then the following are equivalent:\\
 \textup{(i)}
 \[ \dt^n=\left\{\begin{array}{rl}
                  n, & n\le m\le \dim\Hil\\
                  m, & n>m
                 \end{array}\right. .\]
 \textup{(ii)} The degree of the minimal polynomial
 of $T$ is at least $m$ and there is a polynomial $P$ of
 degree $m$ such that $P(T)|_{\ds{1}}=0$.
 \end{cor}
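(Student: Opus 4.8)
The plan is to convert each of (i) and (ii) into a statement about polynomials annihilating the single generator of the first defect space, and then to read off the equivalence from the corollary immediately preceding the statement, whose proof carries over verbatim to finite dimensions. Since $\dt=1$, fix a unit vector $\xi$ with $\ds{1}=\Comp\xi$. Specialising Proposition~\ref{representation of defect space} to $d=1$ gives
\[
\ds{n}=\ds{1}\vee T\ds{1}\vee\cdots\vee T^{n-1}\ds{1}=\spa\{\xi,T\xi,\dots,T^{n-1}\xi\},
\]
so that $\dt^n=\dim\spa\{\xi,T\xi,\dots,T^{n-1}\xi\}$. This number never exceeds $n$, and equals $n$ precisely when $\xi,T\xi,\dots,T^{n-1}\xi$ are linearly independent, i.e. precisely when no nonzero polynomial of degree at most $n-1$ annihilates $\xi$ under the functional calculus $p\mapsto p(T)$. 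Let $p_\xi$ denote the monic generator of the ideal $\{p\in\Comp[t]:p(T)\xi=0\}$, which is nonzero because $\dim\Hil<\infty$; note that $p_\xi$ divides the minimal polynomial $p_T$ of $T$.

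First I would show that (i) holds if and only if $\deg p_\xi=m$. If $\deg p_\xi=m$, then $\xi,\dots,T^{n-1}\xi$ are independent for every $n\le m$, so $\dt^n=n$ there; moreover $T^m\xi\in\spa\{\xi,\dots,T^{m-1}\xi\}$, so $\dt^{m+1}=m=\dt^m$, and Corollary~\ref{stable} propagates this to $\dt^n=m$ for every $n>m$ (and automatically $m=\deg p_\xi\le\dim\Hil$). Conversely, $\dt^m=m$ forces $\xi,\dots,T^{m-1}\xi$ independent, hence $\deg p_\xi\ge m$, while $\dt^{m+1}=m$ yields a monic degree-$m$ relation among $\xi,\dots,T^m\xi$, hence $\deg p_\xi\le m$. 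This is exactly the $d=1$ case of the preceding corollary, so at this point I would simply invoke that argument.

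It remains to match ``$\deg p_\xi=m$'' with condition (ii). One direction is immediate: $p_\xi$ is itself a polynomial of degree $m$ with $p_\xi(T)|_{\ds{1}}=0$, and since $p_\xi\mid p_T$ the minimal polynomial of $T$ has degree at least $m$. For the other direction, the polynomial $P$ of degree $m$ supplied by (ii) satisfies $p_\xi\mid P$, hence $\deg p_\xi\le m$; so what is still needed is that $\deg p_T\ge m$ forces $\deg p_\xi\ge m$, i.e. that $p_\xi=p_T$. This last point --- that $\xi$ is a cyclic vector for $T$, equivalently that $\Hil=\ds{\infty}$ --- is the step I expect to be the main obstacle. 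When $T$ is pure, which is the setting in which statements of this type are of interest (cf.\ Theorem~\ref{pure is maximal}), cyclicity of $\xi$ is precisely Proposition~\ref{pure} with $d=1$, and then the minimal polynomial of the vector $\xi$ coincides with the minimal polynomial of $T$, closing the loop. In the general finite-dimensional case one would first split off the unitary part of $T$, observe that it is contained in $\bigcap_{n}\ds{n}^{\perp}$ and therefore contributes to neither the defect sequence nor the operators $P(T)|_{\ds{1}}$, and then apply the cyclic case to the surviving completely non-unitary summand.
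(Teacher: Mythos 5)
Your reduction of (i) to the statement $\deg p_\xi=m$, where $p_\xi$ is the minimal polynomial of the vector $\xi$ spanning $\ds{1}$, is exactly the content of the paper's (unwritten) argument: the corollary is presented as an immediate specialization of the preceding one, whose proof only yields the equivalence of (i) with ``no polynomial of degree less than $m$ annihilates $\ds{1}$, and some polynomial of degree $m$ does.'' You are right to isolate the remaining step --- passing from the minimal polynomial of the \emph{vector} $\xi$ to the minimal polynomial of the \emph{operator} $T$ --- as the real obstacle, and your treatment of the pure case via Proposition~\ref{pure} (cyclicity of $\xi$, hence $p_\xi=p_T$) is correct and more explicit than anything in the paper.

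The gap is in your final sentence. Splitting off the unitary summand does make it invisible to the defect sequence and to the operators $P(T)|_{\ds{1}}$, but it does \emph{not} make it invisible to the minimal polynomial of $T$, which is what condition (ii) literally refers to; applying the cyclic case to the completely non-unitary part therefore proves a different statement. Indeed (ii) $\Rightarrow$ (i) fails as written: take $\Hil=\Comp^3$ and $T=\mathrm{diag}(0,1,i)$. Then $I-TT^*=\mathrm{diag}(1,0,0)$, so $\dt=1$ and $\ds{1}=\Comp e_1$, and $Te_1=0$ gives $\dt^n=1$ for all $n$, so (i) holds only with $m=1$; yet the minimal polynomial $t(t-1)(t-i)$ of $T$ has degree $3\ge 2$ and $P(t)=t^2$ satisfies $P(T)|_{\ds{1}}=0$, so (ii) holds with $m=2$. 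The corollary is thus only correct if one either assumes $T$ pure (equivalently completely non-unitary, which is the setting in which the paper later invokes it), or reads (ii) as in the preceding corollary, with ``the minimal polynomial of $T$ has degree at least $m$'' replaced by ``there is no polynomial of degree less than $m$ with $P(T)|_{\ds{1}}=0$''; under either reading your argument closes without any decomposition.
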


 Now we recall some of the work of Popescu (\cite{popescu}) in order to
 characterize pure maximal tuple of operators.
 We denote by $\Fockd$ the full Fock space over the $d$-dimensional
 Hilbert space $\Comp^d$ with orthonormal basis
 $(e_1,e_2,\dots ,e_d)$ . It is often represented by
 \[
  \Fockd=\Comp\oplus_{m\ge 1}(\Comp^d)^{\otimes m}
 \]
 but we use the notation ~\eqref{F} to describe it and
 simplify notation as follows. If $f\in F(k,\Lambda)$, let
 \[
  e_{f}=e_{f(1)}\otimes e_{f(2)}\otimes \dots\otimes e_{f(k)},\ \text{ and for }
  k=0,\ e_0=\omega.
 \]
 We call $\omega$ the vacuum vector.
 Then $\Fockd$ is the Hilbert space with basis $\{e_{f}:f\in F\}$.
 For each $n \in \mathbb{N}$, the $n$-th particle space is denoted by $\Gamma_{n]}$
 and is defined by \[\Gamma_{n]}:=\spa\{e_{f}: f\in F_{n]}\}.\]

 \textit{Creation operators} on the full Fock space $\Fockd$ is denoted by
  $S_i, i=1,\dots, d$ and defined by
  \[
    S_i:\Fockd\to\Fockd,\quad \psi\mapsto e_i\otimes\psi,\quad (i=1,\dots,d) .
  \]
   The complete characterization of invariant
  subspaces (consequently co-invariant subspaces) for these creation
  operators on full Fock space by Popescu
  (see \cite{Pop1989}, \cite{popescu}) is given in
  the next theorem.
  \begin{thm}[Popescu]
  \label{invariant subspaces}
  If $\s\subset \Fockd$ is invariant for each $S_1,\dots ,S_d$
  then there exists a sequence $\{\phi_j\}_{j\in J}$ of orthogonal
  inner functions such that
  \[
   \s= \oplus_{j\in J}\Fockd \otimes \phi_j.
  \]
  Moreover, this representation is essentially unique.
  \end{thm}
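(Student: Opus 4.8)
The plan is to follow Popescu's original Wold-type decomposition argument, adapted to the notation of this paper. The starting observation is that the creation operators $S_1,\dots,S_d$ on $\Fockd$ form a $d$-tuple of isometries with pairwise orthogonal ranges: $S_i^*S_j=\delta_{ij}I$, and $\bigoplus_{i=1}^d \ran S_i = \Fockd\ominus\Comp\omega$. If $\s$ is invariant for each $S_i$, then $S_i|_\s$ again has these properties, so the restricted tuple $(S_1|_\s,\dots,S_d|_\s)$ is a $d$-tuple of isometries with orthogonal ranges on $\s$. The first step is therefore to run the noncommutative Wold decomposition for such tuples: setting $\mathcal{W}:=\s\ominus\bigoplus_{i=1}^d S_i\s$ (the "wandering subspace" of the restriction), one shows $\s=\bigoplus_{f\in F}S_f\mathcal{W}$ provided the tuple is \emph{pure} on $\s$, i.e.\ $\sum_{|f|=n}S_fS_f^*\to 0$ strongly on $\s$ as $n\to\infty$. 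Purity on $\s$ is inherited from purity of $(S_1,\dots,S_d)$ on all of $\Fockd$, which is immediate since $\sum_{|f|=n}S_fS_f^* = $ projection onto $\bigoplus_{|g|\ge n}\Comp e_g$, tending strongly to $0$.

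The second step is to encode the wandering subspace $\mathcal{W}$ by inner functions. Fix an orthonormal basis $\{\phi_j\}_{j\in J}$ of $\mathcal{W}\subseteq\s\subseteq\Fockd$. Each $\phi_j$ is a vector in the Fock space, hence can be thought of as a (formal power series / noncommutative) function; the condition that defines an inner function in Popescu's sense is exactly that $\{S_f\phi_j : f\in F\}$ is an orthonormal system, equivalently that the "multiplier" $\phi_j$ acting by $\psi\mapsto$ (the element obtained from $\phi_j$ by the Fock-space convolution with $\psi$) is an isometry; since $\phi_j$ is a unit wandering vector for a tuple of isometries with orthogonal ranges, $\{S_f\phi_j\}_{f\in F}$ is automatically orthonormal, so $\phi_j$ is inner. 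Orthogonality of the $\phi_j$ across different $j$, together with the wandering property, upgrades to orthogonality of the whole families: $\langle S_f\phi_j, S_g\phi_k\rangle = 0$ whenever $(f,j)\ne(g,k)$. Hence $\Fockd\otimes\phi_j := \overline{\spa}\{S_f\phi_j:f\in F\}$ are mutually orthogonal subspaces and their sum is $\bigoplus_{f}S_f\mathcal{W}=\s$, giving the claimed decomposition $\s=\bigoplus_{j\in J}\Fockd\otimes\phi_j$.

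For essential uniqueness, suppose $\s=\bigoplus_{j\in J}\Fockd\otimes\phi_j=\bigoplus_{k\in K}\Fockd\otimes\psi_k$ with both $\{\phi_j\}$ and $\{\psi_k\}$ orthogonal inner families. One shows that in either representation the wandering subspace $\s\ominus\bigoplus_i S_i\s$ is spanned precisely by the respective inner functions: indeed $\Fockd\otimes\phi_j \ominus \bigoplus_i S_i(\Fockd\otimes\phi_j)=\Comp\phi_j$. Therefore $\{\phi_j\}$ and $\{\psi_k\}$ are two orthonormal bases of the \emph{same} Hilbert space $\mathcal{W}$, so $|J|=|K|$ and the two families differ by a unitary (change of orthonormal basis) on $\mathcal{W}$ — this is the precise meaning of "essentially unique." The main technical obstacle is verifying carefully that a unit wandering vector genuinely produces an inner multiplier and that the span $\bigoplus_f S_f\mathcal{W}$ really exhausts $\s$ (not just a proper subspace); both rest on the purity of the creation tuple, so the one place to be careful is the strong-convergence estimate $\sum_{|f|=n}S_fS_f^*\to 0$ and its restriction to the invariant subspace $\s$. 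Since all of this is contained in \cite{Pop1989} and \cite{popescu}, the proof here can simply cite those references after indicating the Wold-decomposition skeleton.
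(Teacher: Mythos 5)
The paper does not prove this theorem at all: it is quoted as a known result of Popescu and used as a black box, with the proof delegated to \cite{Pop1989} and \cite{popescu}. Your sketch is the standard (and correct) argument for it: restrict the creation tuple to $\s$, observe that a tuple of isometries with orthogonal ranges restricted to an invariant subspace is again such a tuple and that purity passes to the restriction, run the noncommutative Wold decomposition to get $\s=\bigoplus_{f\in F}S_f\mathcal{W}$ with $\mathcal{W}=\s\ominus\bigoplus_i S_i\s$, and identify an orthonormal basis of the wandering subspace $\mathcal{W}$ with a family of orthogonal inner functions; essential uniqueness follows because $\mathcal{W}$ is canonically determined by $\s$. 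The one step you gloss over slightly is the passage from ``$\phi_j$ is a unit wandering vector'' to ``$\phi_j$ is inner with $\Fockd\otimes\phi_j$ meaning the range of the associated isometric right multiplier'' --- one must check that the right multiplication operator with symbol $\phi_j$ is well defined and isometric precisely because $\{S_f\phi_j\}_{f\in F}$ is orthonormal, and that orthogonality of distinct wandering basis vectors forces orthogonality of the full ranges; both are routine and are carried out in Popescu's papers, so citing them for these verifications, as you do, is appropriate.
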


  The model for pure non-commuting
  $d$-tuple is the compression of creation operators
  to a co-invariant subspace as we state next (see \cite{Pop1989}).
  \begin{thm}[Popescu]
  \label{model}
  Let $T$ be a pure non-commuting $d$-tuple.
  Then $T\cong \big(P_{\q}(S_1\ot I_{\ds{1}})|_{\q},\dots,
  P_{\q}(S_d\ot I_{\ds{1}})|_{\q}\big)$,
  where $\q$ is a co-invariant subspace for the creation
  tuples $(S_1\ot I_{\ds{1}},\dots,S_d\ot I_{\ds{1}})$,
  $\ds{1}$ is the first defect space of $T$ and $P_{\q}$ denotes
  the projection on to $\q$.
  \end{thm}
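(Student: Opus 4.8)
The plan is to run Popescu's noncommutative Poisson kernel construction, which produces the required unitary equivalence in one stroke. Recall $\ds{1}=\rano\Dt$. I would define the map
\[
K:\Hil\to\Fockd\ot\ds{1},\qquad Kx=\sum_{f\in F}e_{f}\ot\Dt T_{f}^{*}x,
\]
where $T_{f}$ is as in \eqref{operator composition} and $\{e_{f}:f\in F\}$ is the distinguished basis of $\Fockd$. Everything then reduces to three checks: that $K$ is a well-defined isometry, that it intertwines $T_i^*$ with $S_i^*\ot I_{\ds{1}}$, and that its range is the desired co-invariant subspace.

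First I would show $K$ is a well-defined isometry, and this is the only place purity is used. Since $\{e_{f}\}$ is orthonormal the summands of $Kx$ are pairwise orthogonal, so convergence amounts to summability of $\sum_{f}\norm{\Dt T_{f}^{*}x}^{2}$. Grouping by the length $k$ (that is, over $f\in F(k,\Lambda)$) and using $\Dt^{2}=I-\cpt(I)$ together with $\Psi_{T^{k}}=\cpt^{k}$, one gets
\[
\sum_{f\in F(k,\Lambda)}\norm{\Dt T_{f}^{*}x}^{2}=\big\langle\cpt^{k}\big(I-\cpt(I)\big)x,x\big\rangle=\big\langle\big(\cpt^{k}(I)-\cpt^{k+1}(I)\big)x,x\big\rangle ,
\]
so summing over $0\le k\le N$ telescopes to $\langle(I-\cpt^{N+1}(I))x,x\rangle$, which tends to $\norm{x}^{2}$ as $N\to\infty$ precisely because $T$ is pure, i.e. $\cpt^{n}(I)\to0$ strongly. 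Hence the series defining $Kx$ converges and $\norm{Kx}=\norm{x}$.

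Next I would verify $KT_{i}^{*}=(S_{i}^{*}\ot I_{\ds{1}})K$ for $i=1,\dots,d$. This is immediate from the fact that $S_{i}^{*}e_{f}=e_{g}$ when $f$ is obtained from $g$ by prepending $i$ (and $S_{i}^{*}e_{f}=0$ otherwise), together with $T_{f}^{*}=T_{g}^{*}T_{i}^{*}$ for such $f$: applying $S_{i}^{*}\ot I_{\ds{1}}$ termwise to $Kx$ reindexes the sum over words beginning with $i$ into $KT_{i}^{*}x$. Finally set $\q:=\ran K$, a closed subspace of $\Fockd\ot\ds{1}$ since $K$ is an isometry; the intertwining relations say exactly that $\q$ is invariant under each $S_{i}^{*}\ot I_{\ds{1}}$, i.e. co-invariant for the creation tuple $(S_{1}\ot I_{\ds{1}},\dots,S_{d}\ot I_{\ds{1}})$. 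As $K$ is unitary from $\Hil$ onto $\q$ with $K^{*}K=I_{\Hil}$ and $KK^{*}=P_{\q}$, taking adjoints in the intertwining relation and multiplying on the right by $K$ gives $T_{i}=K^{*}(S_{i}\ot I_{\ds{1}})K=K^{*}\big(P_{\q}(S_{i}\ot I_{\ds{1}})|_{\q}\big)K$, which is the asserted unitary equivalence; this is the construction of \cite{Pop1989}. The only genuine obstacle is the isometry step: without purity $K$ is merely a contraction and one obtains a dilation rather than a model, so the telescoping estimate combined with $\cpt^{n}(I)\to0$ in the strong operator topology is really the heart of the argument.
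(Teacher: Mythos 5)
Your proof is correct and is exactly the noncommutative Poisson kernel argument the paper relies on: the theorem is quoted from Popescu, and the text immediately following it introduces the same isometry $K(T)$, the same intertwining relation $K(T)T_i^*=(S_i^*\otimes I_{\ds{1}})K(T)$, and identifies $\q$ with $\ran K(T)$. You have simply supplied the verification details (the telescoping computation using purity, and the adjoint manipulation yielding $T_i=K^*\bigl(P_{\q}(S_i\otimes I_{\ds{1}})|_{\q}\bigr)K$) that the paper leaves to the reference.
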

 The co-invariant subspace appears in the above theorem
 is the image of the Poisson kernel $K(T)$ corresponding
 to the tuple $T$ defined by $ K(T): \Hil\to \Fockd\otimes \ds{1},\ h\mapsto
 (\xi_0,\xi_1,\dots)$ where $\xi_0=\omega\otimes D_Th$ and
 for $k\ge 1$,
 \[
  \xi_k=\sum_{f\in F(k,\Lambda)}e_{f}\otimes D_T(T_f)^* h.
 \]In this case, $K(T)$ is an isometry. Moreover, \[K(T) T_i^* =
 (S_i^* \otimes I_{\mathcal{D}_1}) K(T),\]for all $1 \leq i \leq n$,
 and
 \begin{equation}
 \label{adjoint of poisson}
 K(T)^*: \Fockd\otimes \ds{1}\to \Hil,\quad
 e_{f}\otimes \xi\mapsto T_{f}D_T\xi.
 \end{equation}

 For the class of pure tuples the characterization of maximality
 is given in the next theorem.
 \begin{thm}
 \label{characterisation of pure tuple}
 Let $T$ be a pure contractive $d$-tuple of operators on an infinite dimensional
 Hilbert space $\Hil$ with $\dt=1$. Then the following are equivalent:\\
 \textup{(i)} $T$ is maximal.\\
 \textup{(ii)} There is no polynomial $P$ of $d$ non-commuting variables
 such that
 $P(T)|_{\ds{1}}=0$.\\
 \textup{(iii)} $T\cong (P_{\q}S_1|_{\q},\dots, P_{\q}S_d|_{\q})$,
 where $\q$ is the co-invariant subspace of the creation tuple such that
 $\dim[\ran P_{\q}|_{\Gamma_{n]}}]=1+d+\dots+d^{n}$ for all $n\in\Nat$.\\
 \textup{(iv)} For any $n\in \Nat$, $(\Gamma_{n]} \ot \ds{1}) \cap \ker K(T)^*=\{0\}$
 %The kernel of $K(T)^*$ does not contain
 %vectors of the form $\sum_{f\in F_{n]}}\lambda_{f} e_f\otimes \xi$ for any
 %$n\in\Nat$ and $\xi\in\ds{1}$,
 where $K(T)^*$ is the adjoint of the
 Poisson kernel as in ~\eqref{adjoint of poisson}.
 \end{thm}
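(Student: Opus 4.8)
The plan is to read off (i)$\Leftrightarrow$(ii) directly from Theorem~\ref{characterisation of tuple}, which applies word for word since purity plays no role there, and to obtain (iii) and (iv) from the Poisson model. First I would use that $T$ is pure with $\dt=1$: Theorem~\ref{model} gives $T\cong\big(P_{\q}(S_1\ot I_{\ds{1}})|_{\q},\dots,P_{\q}(S_d\ot I_{\ds{1}})|_{\q}\big)$ with $\q=\ran K(T)$ a co-invariant subspace of $\Fockd\ot\ds{1}$. Since $\ds{1}$ is one dimensional, identifying $\Fockd\ot\ds{1}$ with $\Fockd$ and $S_i\ot I_{\ds{1}}$ with $S_i$ turns this into $T\cong(P_{\q}S_1|_{\q},\dots,P_{\q}S_d|_{\q})$, which is exactly the model displayed in (iii); it then remains to pin down the numerical condition on $\q$.

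For \emph{(iii)$\Leftrightarrow$(iv)}, I would use that the isometry $K(T)$ has closed range $\q$, so $\Ker K(T)^*=\q^{\perp}$. Hence for each $n$,
\[
 (\Gamma_{n]}\ot\ds{1})\cap\Ker K(T)^*=(\Gamma_{n]}\ot\ds{1})\cap\q^{\perp}=\Ker\big(P_{\q}|_{\Gamma_{n]}\ot\ds{1}}\big),
\]
so (iv) at level $n$ is equivalent to injectivity of $P_{\q}$ on $\Gamma_{n]}\ot\ds{1}$, i.e.\ to $\dim\big[\ran(P_{\q}|_{\Gamma_{n]}\ot\ds{1}})\big]=\dim(\Gamma_{n]}\ot\ds{1})=(\dim\Gamma_{n]})(\dim\ds{1})=(1+d+\dots+d^{n})\cdot 1$. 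Under the identification $\Fockd\ot\ds{1}\cong\Fockd$ this is precisely condition (iii).

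For \emph{(i)$\Leftrightarrow$(iv)}, write $\ds{1}=\Comp\xi$. Since $\Dt$ is self-adjoint with $\ds{1}=\rano\Dt=(\Ker\Dt)^{\perp}$, the space $\ds{1}$ reduces $\Dt$ and $\Dt|_{\ds{1}}$ is invertible, so $\Dt\xi=\lambda\xi$ with $\lambda\neq0$. Every vector of $\Gamma_{n]}\ot\ds{1}$ is a combination of the $e_{f}\ot\xi$ with $f\in F_{n]}$, and by~\eqref{adjoint of poisson} we have $K(T)^*(e_{f}\ot\xi)=T_{f}\Dt\xi=\lambda\,T_{f}\xi$. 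Therefore
\[
 \ran\big(K(T)^*|_{\Gamma_{n]}\ot\ds{1}}\big)=\spa\{T_{f}\xi:f\in F_{n]}\}=\ds{n+1},
\]
the last equality being Proposition~\ref{representation of defect space} applied with $\ds{1}=\Comp\xi$. Since $K(T)^*$ maps the $(1+d+\dots+d^{n})$-dimensional space $\Gamma_{n]}\ot\ds{1}$ onto $\ds{n+1}$, it is injective there if and only if $\dt^{n+1}=1+d+\dots+d^{n}$. Letting $n$ range over all its values (the smallest case merely recording the hypothesis $\dt=1$), condition (iv) becomes $\dt^{m}=1+d+\dots+d^{m-1}$ for every $m$, i.e.\ maximality of $T$. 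Combined with (i)$\Leftrightarrow$(ii) and (iii)$\Leftrightarrow$(iv), this gives the equivalence of all four statements.

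The substance of the argument is concentrated in the last step — recognizing $\ran(K(T)^*|_{\Gamma_{n]}\ot\ds{1}})$ as the $(n{+}1)$-st defect space $\ds{n+1}$ and comparing dimensions. Everything else is formal once Popescu's model theorem and the stated properties of the Poisson kernel $K(T)$ are available; the point I expect to require the most care is the consistent suppression of the one dimensional tensor factor $\ds{1}$ throughout the dimension count.
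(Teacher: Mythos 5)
Your proposal is correct, and it reorganizes the equivalences in a way that differs from the paper's proof. The paper proves (i)$\Leftrightarrow$(ii) exactly as you do, but then handles the remaining conditions by two separate computations: it establishes (i)$\Leftrightarrow$(iii) by computing the defect spaces of the compressed tuple inside $\q$ (an induction showing $P_{\q}S_{f(1)}P_{\q}\cdots P_{\q}S_{f(k)}\xi=P_{\q}(e_{f})$, whence $\ds{n}=\ran(P_{\q}|_{\Gamma_{n-1]}})$), and it establishes (ii)$\Leftrightarrow$(iv) by translating elements of $\ker K(T)^*$ into polynomial relations $P(T)\xi=0$. You instead route everything through the Poisson kernel: (iii)$\Leftrightarrow$(iv) falls out of the single identity $\ker K(T)^*=\q^{\perp}$ together with a rank count for $P_{\q}|_{\Gamma_{n]}\ot\ds{1}}$, and (i)$\Leftrightarrow$(iv) from recognizing $\ran\big(K(T)^*|_{\Gamma_{n]}\ot\ds{1}}\big)=\spa\{T_f\xi:f\in F_{n]}\}=\ds{n+1}$ via Proposition~\ref{representation of defect space} and comparing dimensions. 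Your identification of the range of $K(T)^*$ on particle spaces with the defect spaces makes the role of purity and of the Poisson kernel more transparent, and it avoids the paper's compression-induction entirely; the paper's version, on the other hand, gives the explicit formula $\ds{n}=\ran(P_{\q}|_{\Gamma_{n-1]}})$ for the model tuple, which is of independent interest. Both arguments use the same hypotheses in the same places (purity for the model theorem, $\dt=1$ to get $\Dt\xi=\lambda\xi$ with $\lambda\neq 0$), so the two proofs are of essentially equal strength.
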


 \begin{proof}
  (i) $\Leftrightarrow$ (ii) follows from Theorem~\ref{characterisation of tuple}.\\
  (i) $\Leftrightarrow$ (iii)\\
  It is follows from Theorem~\ref{model} that
  $T\cong (P_{\q}S_1|_{\q},\dots, P_{\q}S_d|_{\q})$ where $\q $ is an co-invariant
  subspace for the creation tuple. Thus it is enough to show that the tuple
  $(P_{\q}S_1|_{\q},\dots, P_{\q}S_d|_{\q})$ is maximal if and only if
  $\dim[\ran(P_{\q}|_{\Gamma_{n]}})]=1+d+\dots+d^{n}$ for all $n$. Now we calculate
  the defect spaces of the tuple as follows:
  \[
  \ds{1}=\ran(P_{\q}-\sum_{i=1}^d P_{\q}S_iS_i^*|_{\q})= P_{\q}P_{\Comp\,\omega}P_{\q}.
  \]
  Since the the first defect dimension is one therefore
  $\ds{1}=\spa\{\xi:=P_{\q}(\omega)\}$.
  Note that
  $P_{\q}S_iP_{\q}(\xi)=P_{\q}S_i(I-P_{\q^{\perp}})(\omega)=P_{\q}(e_i)$ as
  $\q^{\perp}$ is an invariant subspace for each $S_i$, $i=1,\dots,d$.
  By induction argument one can show that for any $k\in\Nat$
  and $f\in F(k,\Lambda)$,
  $P_{q}S_{f(1)}P_{\q}\dots P_{\q}S_{f(k)}\xi=P_{\q}(e_{f}) $.
  Then by Proposition~\ref{representation of defect space},
  \[
  \ds{n}= \spa\{ P_{\q}(e_{f}): f\in F_{n-1]} \}
  \] for  all $n$.
  Therefore $\ds{n}=\ran(P_{\q}|_{\Gamma_{n-1]}})$ and the result.\\
  (ii) $\Leftrightarrow$ (iv)\\
  Since the first defect space is one dimensional then $\ds{1}=\Comp\xi$.
  Now as $\ran \Dt=\ran \Dt^2$ we have $\Dt\xi=\lambda\xi$ for some
  non-zero scalar $\lambda$. By definition of $K(T)^*$ it follows
  that $\sum_{f\in F_{k]}} a_{f}e_f\ot\xi\in \ker K(T)^*$ if and only if
  $P(T)(\xi)=0$ where $P=\sum_{f\in F_{k]}}\lambda a_{f} Z_{f}$,
  $Z_{f}=z_{f(1)}\dots z_{f(k)}$ and $k\in\Nat$. Thus the theorem.

 \end{proof}
 \begin{rems}
 (i) The last equivalent condition in the above theorem is independent
 of the assumption $\dt=1$. More precisely, a pure contractive $d$-tuple
 $T$ with finite $\dt$ is maximal if and only if (iv) holds. To see this
 let $\dt=n$ and $\ds{1}=\spa\{\phi_1,\dots,\phi_n\}$.
 Set $\psi_i:=D_T(\phi_i)$, $i=1,\dots n$. Now as $\ran D_T^2=\ds{1}$ and 
 $\ran D_T=\spa\{\phi_1,\dots,\phi_n\}$ we also have
 $\ds{1}=\spa\{\psi_1,\dots \psi_n\}$. Thus $T$ is maximal 
 if and only if the set of vectors $\{T_{f}\psi_i: f\in F_{k]}, i=1,\dots,n\}$
 are linearly independent for any $k\in\Nat$.
 Then the claim readily follows from the following 
 equivalent conditions: 
 \[
  \sum_{i=1,\dots,n, f\in F_{k]}}a_{f,i}e_f\otimes\phi_i\in \ker K(T)^*
  \Leftrightarrow \sum_{i=1,\dots,n, f\in F_{k]}} a_{f,i}T_{f}\psi_i=0.
 \]
 for all $k\in\Nat$.\\
 (ii) The condition (iii) in the above theorem can be made independent of 
 the assumption $\dt=1$ as follows. If $\dt=k$ then
 $T\cong (P_{\q}(S_1\ot I_{\Comp^k})|_{\q},\dots, P_{\q}(S_d\ot I_{\Comp^k})|_{\q})$
 where $\q$ is a joint co-invariant subspace for the ampliated creation tuple. 
 Then the equivalence condition of maximality in this case is the following:
 \[\dim[\ran P_{\q}|_{\Gamma_{n]}\ot \Comp^k}]=(1+d+\dots+d^{n})k\]
 for all $n\in\Nat$.
 \end{rems}

  By Proposition~\ref{pure}, we know that if $T$ is a single pure
  contraction $T$ on a Hilbert space $\Hil$ then
  $\ds{\infty}=\Hil$. Then for
  every polynomial $p$ such that $p(T)|_{\ds{1}}=0$
  implies $0=T^np(T)\xi=p(T)T^n\xi$ for $\xi\in \ds{1}$ and $n\in\Nat$.
  Now as $\ds{\infty}=\ov{\spa}\{T^n\xi:n\in\Nat, \xi\in\ds{1}\}$
  we have $p(T)=0$. Thus for a single pure contraction $T$,
  \[p(T)|_{\ds{1}}=0\Leftrightarrow p(T)=0\]
  for any polynomial $p$.
  This observation helps us to find connection with minimal function as follows.
  Below we denote by $H^{\infty}(\mathbb{D})$ the
  multiplier algebra of the Hardy space on
  the unit disc $H^2(\mathbb{D})$ .

   \begin{thm}
  Let $T$ be a single pure contraction on
  a Hilbert space $\Hil$ with $\dt=1$.\\
  \textup{(a)} If $\Hil$ is infinite dimensional
  and there is a non-zero function $m\in H^{\infty}(\mathbb{D})$
  such that $m(T)=0$
  then $m$ can not be a polynomial.\\
  \textup{(b)} If $\Hil$ is finite dimensional
  then the degree of the minimal polynomial
  is $\dim\Hil$.
  \end{thm}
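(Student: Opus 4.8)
The plan is to obtain both statements directly from Theorem~\ref{pure is maximal} together with the equivalence $p(T)|_{\ds{1}}=0\Leftrightarrow p(T)=0$, valid for any single pure contraction $T$ and any polynomial $p$, recorded just above. The content of Theorem~\ref{pure is maximal} is that for a single pure contraction with $\dt=1$ the defect indices take the largest values permitted, namely $\Delta_{T^n}=n$ for all $n\le\dim\Hil$; transporting this information back to the level of polynomial relations on $\ds{1}$ is exactly what parts (a) and (b) ask for.

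For part~(a), assume $\Hil$ is infinite dimensional. Then Theorem~\ref{pure is maximal} gives $\Delta_{T^n}=n$ for every $n\in\Nat$, which for $d=1$ is precisely the assertion that $T$ is maximal; hence by Theorem~\ref{characterisation of tuple} no non-zero polynomial $P$ satisfies $P(T)|_{\ds{1}}=0$, and therefore by the equivalence above no non-zero polynomial $P$ satisfies $P(T)=0$. Consequently, if $m\in H^{\infty}(\mathbb{D})$ is non-zero with $m(T)=0$, then $m$ cannot be a polynomial, since on polynomials the $H^{\infty}$-functional calculus (available because a pure contraction is completely non-unitary) reduces to the elementary polynomial calculus. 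One can also argue without naming maximality: writing $\ds{1}=\Comp\xi$, Proposition~\ref{representation of defect space} gives $\ds{n}=\spa\{\xi,T\xi,\dots,T^{n-1}\xi\}$, so $\Delta_{T^n}=n$ for all $n$ forces $\{\xi,T\xi,\dots,T^{n-1}\xi\}$ to be linearly independent for every $n$, whence no non-trivial polynomial annihilates $\xi$ and then, by purity, none annihilates $T$.

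For part~(b), put $N=\dim\Hil<\infty$. By Theorem~\ref{pure is maximal}, $\Delta_{T^n}=n$ for $0\le n\le N$; in particular $\ds{N}$ has dimension $N$, hence $\ds{N}=\Hil$, so $\Delta_{T^N}=\Delta_{T^{N+1}}=N$ and Corollary~\ref{stable} yields $\Delta_{T^n}=N$ for all $n\ge N$. Thus condition~(i) of Corollary~\ref{single contraction} holds with $m=N$, and that corollary then tells us that the minimal polynomial of $T$ has degree at least $N$ and that there is a polynomial $P$ of degree $N$ with $P(T)|_{\ds{1}}=0$. By the equivalence above this $P$ satisfies $P(T)=0$, so the minimal polynomial divides $P$ and hence has degree at most $N$; combining, its degree equals $N=\dim\Hil$.

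The argument involves no genuine analytic difficulty; it is an assembly of Theorem~\ref{pure is maximal}, Corollary~\ref{stable}, Corollary~\ref{single contraction} and the polynomial-relation equivalence. The only points demanding a little care are: first, recording that $m(T)$ coincides with the ordinary polynomial in $T$ when $m$ is a polynomial, so that the hypothesis $m(T)=0$ with $m$ polynomial really does contradict the absence of polynomial relations; and second, verifying that the finite-dimensional case of Theorem~\ref{pure is maximal} together with the stabilization furnished by Corollary~\ref{stable} places us exactly in the hypothesis of Corollary~\ref{single contraction} with $m=\dim\Hil$.
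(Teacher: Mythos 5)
Your proposal is correct and follows essentially the same route as the paper: both parts rest on Theorem~\ref{pure is maximal} (maximality of a pure contraction with $\dt=1$), the equivalence $p(T)|_{\ds{1}}=0\Leftrightarrow p(T)=0$ established just before the statement, Theorem~\ref{characterisation of tuple} for part (a), and Corollary~\ref{single contraction} for part (b). Your write-up is in fact slightly more complete than the paper's, since you make explicit the upper bound $\deg\le\dim\Hil$ on the minimal polynomial that the paper leaves implicit in part (b).
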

  \begin{proof}
   Part (a) follows from the
   above discussion and Theorem~\ref{characterisation of tuple}
   and the fact that $T$ is maximal.
   For part (b) note that the maximality of the operator $T$ implies the
   defect spaces in this case are as follows:
    \[\dt^n=\left\{\begin{array}{rl}
                  n, & n\le \dim\Hil\\
                  \dim\Hil, & n>\dim\Hil
                 \end{array}\right. .
   \]
   Then by the Corollary~\ref{single contraction} we have the
 degree of the minimal polynomial is at least $\dim\Hil$
 and this completes the proof.
  \end{proof}

  \begin{rem}
  It is well known that any single pure contraction
  $T$ on a Hilbert space $\Hil$ with $\dt=1$ is unitarily equivalent to
  $P_{H_{\theta}}M_{z}|_{H_{\theta}}$ where
  $H_{\theta}= H^2(\mathbb{D}) \ominus \theta H^2(\mathbb{D})$
  is a co-invariant subspace for the co-ordinate multiplication operator
  $M_{z}$ and $\theta$ is an inner function.
  In this case the minimal function of $T$ is $\theta$
  (see ~\cite{book}, Chapter 3, Proposition 4.3). Then the
  above theorem tells us that if $\Hil$ is infinite dimensional then
  $\theta$ can not be polynomial and if $\theta$ is
  a polynomial then the dimension of $\Hil$ is indeed same as the
  degree of $\theta$.
  \end{rem}

 \section{Maximal submodules of $H^2_d$}

 This section concerns the maximality of submodules
 of the Drury-Arveson module (\cite{D}, \cite{arveson}). We denote by $H^2_d$
 the Drury-Arveson module on the
 unit ball $\mathbb{B}_d$ and defined by the reproducing kernel
 $K_{\lambda} (z) = \frac{1}{(1-<\lambda,z>)},$ where
 $<\lambda,z> = \sum _{j=1} ^d z_j \overline{\lambda _j}$ and
 $\lambda, z\in\mathbb{B}_d$. For $d=1$, $H^2_1=H^2(\mathbb{D})$ the Hardy space
 on the unit disc.
 The multiplication operators $M_{z_i}$ by the co-ordinate functions $z_i$,
 $i=1,\dots,d$,
 turns $H^2_d$ to a Hilbert module over $\mathbb{C}[\bm{z}]:=
 \mathbb{C}[z_1,\dots,z_d]$ as follows:
 \[
  \mathbb{C}[\bm{z}]\times H^2_d \to H^2_d, \
  (p,h)\mapsto p(M_{z_1},\dots,M_{z_d})h.
 \]
 %It is well known that the Arveson space
 %has a canonical
 %orthonormal basis $$\{ [\frac{(k_1+k_2+\cdots+k_d)!}{k_1!k_2!\cdots k_d!}]^{1/2}
 % z_1^{k_1} z_2^{k_2} \cdots z_d^{k_d} \},$$ where $K= (k_1,k_2,\cdots,k_d)$ runs over all
 % multi-indices of nonnegative integers.
  A closed subspace $\s$ of $H^2_d$ is said to be \textit{submodule}
  of $H^2_d$ if $M_{z_i} \s \subseteq \s$ for all $i = 1,
  \ldots, d$. Let $\s$ be a submodule of $H^2_d$ and
  let $R_{\s}:= (M_{z_1} |_{\s},\cdots,M_{z_d} |_{\s})$
  be the restriction of the $d$-shift to $\s.$ It is readily follows that the
  $d$-tuple $R_{\s}$ is contractive. A submodule $\s$
  of $H^2_d$ is \emph{maximal}
  if the contractive tuple $R_{\s}$ is maximal.
 % $$M_{z_1} P_M M_{z_1}^* + \cdots+M_{z_d} P_M M_{z_d}^* \leq P_M,$$
 % where $P_M$ is the orthogonal projection from $H_{d} ^2$ onto $M.$
 % The first defect dimension for the tuple $T_M$ is given by
 % $$\Delta _{T_M} = \mbox{dim}\{ \overline{\mbox{Ran}}
 % (P_M - \sum _{i=1} ^d M_{z_i} P_M M_{z_i}^*) \}.$$
 For $d=1$, let $\s\subset H^2_d$ be a submodule
 of the Hardy space on the unit disc. Then $R_{\s}:=M_z|_{\s}$
 is a pure isometry ( $R_{\s}^{*n}\to 0$ in S.O.T. as $n\to \infty$) with multiplicity
 one. In other words, that $R_{\s}\cong M_{z}$. Consequently
 we have the following result:
 \begin{thm}
  Any submodule $\s$ of $H^2(\mathbb{D})$ is maximal.
 \end{thm}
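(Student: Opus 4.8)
The plan is to reduce the statement to the model operator $M_z$ on $H^2(\mathbb{D})$ and then simply read off the defect spaces. As noted just above the statement, if $\s\neq\{0\}$ is a submodule of $H^2(\mathbb{D})$, then $R_{\s}=M_z|_{\s}$ is a pure isometry of multiplicity one, and by the Wold decomposition this forces $R_{\s}\cong M_z$ on $H^2(\mathbb{D})$. Since maximality of a contractive tuple depends only on its unitary equivalence class (the defect indices $\dt^n$ are unitary invariants), it suffices to prove that the single operator $M_z$ acting on $H^2(\mathbb{D})$ is maximal. Recalling that for $d=1$ the maximality condition reads $\dt^n=(1+1+\cdots+1)\dt=n\,\dt$ for all $n$, what we must check is that $\Delta_{M_z^n}=n$ for every $n\in\Nat$.

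First I would compute the first defect space. Since $M_z$ is an isometry, $M_z^{\*}M_z=I$, and $M_zM_z^{\*}$ is the orthogonal projection onto $\Ran M_z=zH^2(\mathbb{D})$. Hence
\[
 D_{M_z}^2=I-M_zM_z^{\*}=P_{\Comp},
\]
the projection onto the constant functions, so $\ds{1}=\Comp$ and $\Delta_{M_z}=1$. Next I would invoke Proposition~\ref{representation of defect space} to obtain
\[
 \ds{n}=\ds{1}\vee M_z(\ds{1})\vee\cdots\vee M_z^{n-1}(\ds{1})
       =\spa\{1,z,z^2,\dots,z^{n-1}\},
\]
which is a space of dimension exactly $n$. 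Therefore $\Delta_{M_z^n}=n$ for all $n\in\Nat$, which is precisely the maximality of $M_z$, and hence $R_{\s}$ is maximal; that is, $\s$ is a maximal submodule.

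An equivalent and perhaps cleaner way to finish, once $\ds{1}=\Comp$ is identified, is to apply Theorem~\ref{characterisation of tuple}: $M_z$ is maximal if and only if there is no nonzero polynomial $P$ with $P(M_z)|_{\ds{1}}=0$, and since $P(M_z)\,1=P$ as an element of $H^2(\mathbb{D})$, this vanishes only when $P=0$. Either route makes the proof essentially immediate. There is no real obstacle here; the only point that requires a word of care is the reduction $R_{\s}\cong M_z$, i.e.\ noting that every nonzero invariant subspace of the shift on $H^2(\mathbb{D})$ carries a shift of multiplicity one (equivalently, Beurling's theorem $\s=\theta H^2(\mathbb{D})$ with the unitary $f\mapsto\theta f$), after which the general defect-space machinery of Sections~2 and~3 does all the work. (If one allows the degenerate submodule $\s=\{0\}$, the statement holds vacuously.)
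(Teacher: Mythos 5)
Your proof is correct and takes essentially the same route as the paper: the paper also reduces the statement to $R_{\s}\cong M_z$ via the observation that $M_z|_{\s}$ is a pure isometry of multiplicity one, and treats the maximality of $M_z$ as immediate, which you have merely written out explicitly by computing $\ds{n}=\spa\{1,z,\dots,z^{n-1}\}$ from Proposition~\ref{representation of defect space}. No gaps; your remark that the defect indices are unitary invariants is the only point the paper leaves tacit.
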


 But for $d\ge 2$ the above theorem does not hold
 in general as we show next.
 For the \emph{rest of the section} we assume $d\ge 2$.

Before proceeding, we shall recall a result concerning the defect
space and the multipliers of submodules of the Drury-Arveson
module(for
   details see
   ~\cite{arveson}, ~\cite{greene}, ~\cite{trent}).
First note that the defect operator $D_{R_{\s}}$ and the defect
dimension of the tuple $R_{\s}$ are given by \[D_{R_{\s}} = (P_{\s}
- \sum _{i=1} ^d M_{z_i} P_{\s}
 M_{z_i}^*)^{1/2},\]
and \[\Delta_{R_{\s}}=\dim [\overline{\mbox{ran}} D_{R_{\s}}],\] where
$P_{\s}$ is the orthogonal projection in $B(H^2_d)$ with range $\s$.

  \begin{thm}
  \label{generator}
  Let $\s$ be a submodule of $H^2_d$ with $\Delta_{T_{\s}}=n$.
  Then there exists
   $\phi_i \in \ran D _{R_{\s}} , i=1,\dots, n$ such that
   each $\phi_i$ is a multiplier and
   $$P_{\s} =\sum_{i=1}^n M_{\phi_i} M_{\phi_i}^*$$
   and the submodule $\s$ is generated by $\{\phi_i\}_{i=1}^n.$
  \end{thm}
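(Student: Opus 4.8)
The plan is to exploit the structure of $R_{\s}$ as a contractive tuple together with the multiplier property of the Drury-Arveson module. The first defect space $\dst[R_{\s}]=\rano D_{R_{\s}}$ is $n$-dimensional; pick an orthonormal basis $\{\psi_1,\dots,\psi_n\}$ of it. The key point is that each $\psi_i$ is in fact a multiplier of $H^2_d$. This is the standard fact (going back to Arveson and Greene--Richter--Sundberg, and available in the references cited) that if $\s$ is a submodule of $H^2_d$ and $D_{R_{\s}}^2 = P_\s - \sum_i M_{z_i}P_\s M_{z_i}^*$ has finite rank, then the range vectors are multipliers; concretely one checks that $\langle \psi_j, \psi_i\rangle_{H^2_d}$ agrees with the constant term data forcing $M_{\psi_i}^*M_{\psi_j}$ to be well behaved. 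I would therefore first \emph{state} that each such $\phi_i\in\rano D_{R_\s}$ is a multiplier, citing the relevant result, and then build the identity $P_\s=\sum_{i=1}^n M_{\phi_i}M_{\phi_i}^*$ from it.

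To prove $P_\s=\sum_{i=1}^n M_{\phi_i}M_{\phi_i}^*$, I would argue as follows. Since $M_z|_\s$ has finite defect and $\s$ is a submodule of the Drury-Arveson module, the tuple $R_\s$ is \emph{pure}: $\Psi_{R_\s}^k(I)\to 0$ in S.O.T., because $\s\subset H^2_d$ and the $d$-shift on $H^2_d$ is pure, hence so is any restriction to an invariant subspace. Now apply the telescoping identity \eqref{sum formula} inside $\s$: for the pure tuple $R_\s$ one gets
\[
 P_\s = \sum_{k=0}^{\infty}\Psi_{R_\s}^k\big(D_{R_\s}^2\big)
      = \sum_{k=0}^{\infty}\sum_{|f|=k} M_{z_f}D_{R_\s}^2 M_{z_f}^*,
\]
the sum converging strongly. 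Writing $D_{R_\s}^2=\sum_{i=1}^n |\phi_i\rangle\langle\phi_i|$ (the projection/decomposition onto the $n$-dimensional defect space in terms of a basis $\phi_i$ chosen from $\rano D_{R_\s}$), and using that $\phi_i$ is a multiplier so that $M_{z_f}\phi_i = M_{\phi_i} z_f$ and $\phi_i\langle \phi_i, M_{z_f}^* h\rangle = M_{\phi_i}\big(\langle z_f, M_{\phi_i}^* h\rangle z_f / \|\cdot\|\big)$ assemble correctly, the double sum collapses to $\sum_{i=1}^n M_{\phi_i}\big(\sum_{f}|z_f\rangle\langle z_f|\big)M_{\phi_i}^* = \sum_{i=1}^n M_{\phi_i}M_{\phi_i}^*$, since $\sum_f |z_f\rangle\langle z_f| = I_{H^2_d}$ is the reproducing-kernel resolution of the identity on the Drury-Arveson space.

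Finally, for the generation statement: let $\s_0$ be the submodule of $H^2_d$ generated by $\phi_1,\dots,\phi_n$, i.e. the closed span of $\{M_{z_f}\phi_i\}$. Each $\phi_i\in\rano D_{R_\s}\subseteq \s$, and $\s$ is a submodule, so $\s_0\subseteq\s$. Conversely, the identity $P_\s=\sum_i M_{\phi_i}M_{\phi_i}^*$ shows that the range of $P_\s$ — which is $\s$ — is contained in the closed sum of the ranges of the $M_{\phi_i}$, and $\rano M_{\phi_i} = \ov{\spa}\{M_{z_f}\phi_i : f\}\subseteq \s_0$; hence $\s\subseteq\s_0$. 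Therefore $\s=\s_0$.

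I expect the main obstacle to be the multiplier claim: justifying that vectors in the finite-dimensional defect space $\rano D_{R_\s}$ are genuinely multipliers of $H^2_d$ (not merely elements of $H^2_d$), and handling the strong convergence of the infinite sum $\sum_f M_{z_f}D_{R_\s}^2 M_{z_f}^*$ when passing it through the $M_{\phi_i}$'s. Both are supplied by the cited literature (\cite{arveson},\cite{greene},\cite{trent}), so in the write-up I would invoke those results rather than reprove them, and keep the computation above at the level of rearranging the two convergent sums.
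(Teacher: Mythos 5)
The paper does not actually prove this statement: it is recalled verbatim from the cited literature (\cite{arveson}, \cite{greene}, \cite{trent}), and the displayed computation that follows it in the paper goes in the \emph{opposite} direction to yours --- it starts from $P_{\s}=\sum_k M_{\phi_k}M_{\phi_k}^*$ and deduces $D_{R_{\s}}^2=\sum_k|\phi_k\rangle\langle\phi_k|$. So the relevant question is whether your sketch is a genuine proof, and there it has a circularity problem at the one step that carries all the weight. Your argument is: (a) choose $\phi_i$ in the defect space with $D_{R_{\s}}^2=\sum_i|\phi_i\rangle\langle\phi_i|$; (b) assert that these $\phi_i$ are multipliers, citing the literature; (c) telescope. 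But there is no independently quotable lemma saying ``vectors in $\ran D_{R_{\s}}$ are multipliers.'' That fact is known only \emph{as a consequence} of the theorem you are trying to prove: McCullough--Trent and Greene--Richter--Sundberg first produce multipliers $\phi_i$ with $P_{\s}=\sum_i M_{\phi_i}M_{\phi_i}^*$ (this is where the complete Nevanlinna--Pick property of the Drury--Arveson kernel enters, via a factorization/lurking-isometry argument), and only then does one read off that $\ran D_{R_{\s}}$ is spanned by multipliers. Your parenthetical ``one checks that $\langle\psi_j,\psi_i\rangle$ agrees with the constant term data'' is not a proof and cannot be turned into one without the NP machinery. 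So steps (a) and (c) reduce the theorem to a statement essentially equivalent to the theorem; the irreducible core is untouched.

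The surrounding computations are essentially sound and worth keeping: $R_{\s}$ is pure because it is dominated by the pure $d$-shift; the telescoping identity $P_{\s}=\sum_{k\ge 0}\sum_{|f|=k}M_{z_f}D_{R_{\s}}^2M_{z_f}^*$ (strong convergence) is correct; and $\sum_f|z_f\rangle\langle z_f|=I_{H^2_d}$ does hold, because the multiplicity $|\alpha|!/\alpha!$ of each monomial among the words $f$ exactly cancels $\|z^\alpha\|^2=\alpha!/|\alpha|!$ --- say this explicitly rather than waving at a ``resolution of the identity.'' Two smaller corrections: an orthonormal basis $\{\psi_i\}$ of $\ran D_{R_{\s}}$ does \emph{not} satisfy $D_{R_{\s}}^2=\sum_i|\psi_i\rangle\langle\psi_i|$ in general (that would make $D_{R_{\s}}^2$ a projection); you must take the spectral decomposition $\phi_i=\lambda_i^{1/2}e_i$. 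And the displayed manipulation $\phi_i\langle\phi_i,M_{z_f}^*h\rangle=M_{\phi_i}(\cdots/\|\cdot\|)$ is garbled; the clean identity is $M_{z_f}|\phi_i\rangle\langle\phi_i|M_{z_f}^*=|z_f\phi_i\rangle\langle z_f\phi_i|=M_{\phi_i}|z_f\rangle\langle z_f|M_{\phi_i}^*$. With those repairs, what you have is a correct and reasonably clean derivation of the theorem \emph{from} the multiplier property of the defect space --- but that property is the theorem, so you should either cite the full statement outright (as the paper does) or reproduce the Nevanlinna--Pick factorization argument.
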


%  \begin{thm}
%  For a submodule $M$ of $H_d^2,$ $d\geq 2,$ the sequence associated with $M$ in
%  the above theorem is finite if and only if the defect operator$ \Delta _{T_M}$ is of   finite rank,
%   and in this case the cardinal number of the sequence equals rank $\Delta _{T_M}.$
%  \end{thm}

    By the above theorem one can describe all the defect spaces
    of $R_{\s}$ in terms of the generators of $\s$
    as follows. Let $\Delta_{R_{\s}}=n$ and $\phi_i, i=1,\dots,n$,
    are as above such that $P_{\s}=\sum_{i}M_{\phi_i}M_{\phi_i}^*$.
    Then

\[\begin{split}
       D_{R_{\s}}^2 & =  P_{\s} -\sum_{i=1}^d M_{z_i} P_{\s} M_{z_i}^*
       \\ & = \sum _{k=1}^n M_{\phi_k}
        (I_{H^2_d} - \sum _{i=1}^d M_{z_i}M_{z_i}^*) M_{\phi_k}^* \\
        &= \sum _{k=1}^n M_{\phi_k} |1\rangle\langle1|M_{\phi_k}^*
        \\& = \sum _{k=1}^n |\phi_k\rangle\langle \phi_k|,
\end{split}\]

   where $|f \rangle\langle g|$ denote the rank one operator
   that takes $h$ to $\langle g, h \rangle f$ for all $f, g, h \in H^2_d$.
% Therefore $D_{R_{\s}}^2=D_{R_{\s}}=\sum _{k=1}^n |\phi_k\rangle\langle \phi_k|$.
   Thus \[\ds{1}=\spa\{\phi_i:i=1,\dots,n\},\] and
   by Proposition~\ref{representation of defect space}
   \[\ds{m}=\spa\{ z_1^{j_1}\cdots z_d^{j_d}\phi_i:
   i=1,\dots n\ \text{ and } \sum_{t=1}^d j_t = m-1\},\] for all $m\in\Nat$.

   Now we investigate the question of maximality of a homogeneous
   submodule $\s$ when $ \Delta _{T_{\s}}$ is finite.

\begin{thm}
Suppose $\s$ is a homogeneous submodule of $H^2_d$ with $ \Delta
_{R_{\s}}<\infty$. Then $\s$ is not maximal.
\end{thm}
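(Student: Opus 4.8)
The plan is to exploit the explicit description of the defect spaces of $R_{\s}$ in terms of the generators that was derived just above the statement, together with the homogeneity assumption. Recall that if $\Delta_{R_{\s}}=n$ and $\phi_1,\dots,\phi_n$ are the generators furnished by Theorem~\ref{generator}, then
\[
\ds{m}=\spa\{z_1^{j_1}\cdots z_d^{j_d}\phi_i: i=1,\dots,n,\ \textstyle\sum_t j_t=m-1\}.
\]
When $\s$ is homogeneous, the projection $P_{\s}$ commutes with the grading, so the range $\ran D_{R_{\s}}=\ds{1}$ is itself a graded subspace; hence each $\phi_i$ may be taken to be a homogeneous polynomial, say of degree $\delta_i$. (This homogeneity of the generators is the one point I would verify carefully: it follows because $D_{R_{\s}}^2=\sum|\phi_k\rangle\langle\phi_k|$ is a finite-rank positive operator whose range is graded, so one can choose an orthonormal-type basis of the range consisting of homogeneous elements — or alternatively decompose each $\phi_i$ into homogeneous components and observe the components still generate $\s$ and still satisfy the projection identity after re-indexing.)

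Granting that, the key computation is a dimension count. The vectors $z_1^{j_1}\cdots z_d^{j_d}\phi_i$ with $\sum_t j_t=m-1$ all lie in the homogeneous subspace of $H^2_d$ of degree $(m-1)+\delta_i$. For fixed $i$, the span of $\{z^{j}\phi_i:|j|=m-1\}$ has dimension at most $\binom{m-1+d-1}{d-1}$, the number of monomials of degree $m-1$. Summing over $i$,
\[
\Delta_{R_{\s}^m}=\dim\ds{m}\le n\binom{m+d-2}{d-1}.
\]
On the other hand, maximality of the commuting tuple $R_{\s}$ (with first defect index $n$) would require, for all $m$ with the maximal value not exceeding $\dim\s$,
\[
\Delta_{R_{\s}^m}=n\sum_{k=0}^{m-1}\binom{k+d-1}{d-1}=n\binom{m+d-1}{d}.
\]
Since $\s$ is a proper submodule with finite but positive defect index and (being a nonzero submodule of the infinite-dimensional $H^2_d$ with $d\ge2$) is infinite-dimensional, the ``otherwise'' clause in the definition of maximality never kicks in, so maximality would force $n\binom{m+d-1}{d}=\dim\ds{m}$ for every $m$. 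Comparing with the upper bound gives $\binom{m+d-1}{d}\le\binom{m+d-2}{d-1}$ for all $m$, which is false for $d\ge2$ and $m\ge2$ (the left side grows like $m^d$, the right like $m^{d-1}$). This contradiction shows $\s$ is not maximal.

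The main obstacle I anticipate is purely bookkeeping: making the reduction to homogeneous generators airtight, and being careful about the edge case where $\Delta_{R_{\s}}=0$, i.e.\ $\s=\{0\}$ — but a proper submodule is understood to be nonzero throughout this section (and anyway a zero tuple is trivially ``maximal,'' so the statement implicitly assumes $\s\ne\{0\}$, equivalently $n\ge1$). Once $n\ge1$ is in hand, the inequality $n\binom{m+d-2}{d-1}<n\binom{m+d-1}{d}$ for, say, $m=d+1$ already does the job, so one does not even need the asymptotics — a single well-chosen value of $m$ suffices to break maximality, and in fact one could localize the failure to the smallest $m$ for which $\binom{m+d-1}{d}>\binom{m+d-2}{d-1}$. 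I would present the argument with that single explicit $m$ to keep it clean.
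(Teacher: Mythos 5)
Your central inequality is the step that fails. You bound $\dim\ds{m}$ by $n\binom{m+d-2}{d-1}$ using the displayed formula $\ds{m}=\spa\{z^{j}\phi_i:\sum_t j_t=m-1\}$, but that formula, as printed in the paper, is a typo: defect spaces increase (Proposition 2.1(i)), and Proposition 2.4 gives $\ds{m}=\ds{1}\vee R_{\s}(\ds{1}^d)\vee\cdots\vee R_{\s}^{m-1}(\ds{1}^{d^{m-1}})$, i.e. $\ds{m}=\spa\{z^{j}\phi_i:|j|\le m-1\}$. For homogeneous $\phi_i$ of degree $\delta_i$ the vector $\phi_i$ itself lies in $\ds{m}$ but not in $\spa\{z^{j}\phi_l:|j|=m-1\}$, so the two sets genuinely differ. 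Concretely, for $\s=H^2_2\ominus\Comp$ one has $n=2$, $\phi_i=z_i$, and $\ds{3}$ is the span of all monomials of degree $1,2,3$, of dimension $9$, whereas your bound asserts $\dim\ds{3}\le 2\binom{3}{1}=6$. With the corrected spanning set the number of spanning vectors is $n\sum_{k=0}^{m-1}\binom{k+d-1}{d-1}=n\binom{m+d-1}{d}$, which is exactly the maximal value, so the raw dimension count yields no contradiction at all: the entire content of non-maximality is that these particular vectors are linearly \emph{dependent}, and that must be exhibited rather than counted away.

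The argument can be repaired, and the repair is essentially what the paper does: maximality forces $\{z^{j}p_i: j\in\Nat^d,\ i=1,\dots,n\}$ to be linearly independent, and because the $p_i$ are polynomials this fails. For $n\ge 2$ the dependence comes from the commutation syzygy $p_k\cdot(z^{j}p_i)=p_i\cdot(z^{j}p_k)$ (each side being a finite linear combination of vectors from the family), or equivalently from your degree-by-degree count done on the correct spanning set: the vectors of total degree $N$ number $\sum_i\binom{N-\delta_i+d-1}{d-1}\sim nN^{d-1}/(d-1)!$ yet sit inside the degree-$N$ homogeneous component, of dimension $\binom{N+d-1}{d-1}\sim N^{d-1}/(d-1)!$, so dependence is forced once $n\ge2$ and $N$ is large. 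Be aware that neither version disposes of $n=1$: there $\{z^{j}p_1\}$ is genuinely linearly independent (multiplication by a nonzero polynomial is injective), and one needs the separate fact that a proper homogeneous submodule of $H^2_d$, $d\ge2$, cannot have defect index $1$ (equivalently, $M_{\phi}$ is never isometric for a nonconstant polynomial $\phi$); the paper's one-line assertion that the vectors ``can not be linearly independent'' glosses over this case as well.
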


    \begin{proof}
    Let  $ \Delta _{R_{\s}} = n$.
    Note that a submodule is homogeneous if and only if it is generated by
    homogeneous polynomials. Consequently, there exists
    an orthonormal basis of $\s$ consisting of homogeneous
    polynomials, and hence there exists polynomials $p_{i},$ $i=1,\dots,n$
    such that $\ds{1}=\spa\{p_{i}:i=1,\dots,n\}$. For the contradiction
    suppose $\s$ is maximal then by maximality of the tuple
    $R_{\s}$, the set of vectors $\{ z_1^{j_1}\cdots z_d^{j_d}p_i: i=1,\dots,n
    \ \text{ and } j_1,\dots, j_d\in \mathbb{N}\}$
    are linearly independent. However since $p_k$'s are polynomials
    those vectors can not be linearly
   independent. This concludes the proof.
    \end{proof}

  \begin{cor}
  Let $\s$ be a submodule of $H^2_d$ with $\Delta_{R_{\s}}<\infty$
  and $P_{\s}=\sum_{i=1}^n M_{p_i}M_{p_i}^*$ for non-constant polynomials
  $p_i$'s and $n\in\Nat$. Then $\s$ is not maximal.
  \end{cor}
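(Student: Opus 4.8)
The plan is to read off all the defect spaces of $R_{\s}$ from the hypothesis and then contradict maximality by exhibiting an explicit linear dependence. Arguing exactly as in the computation of $D_{R_{\s}}^2$ displayed after Theorem~\ref{generator} (with each $\phi_k$ replaced by $p_k$), the assumption $P_{\s}=\sum_{i=1}^n M_{p_i}M_{p_i}^*$ gives $D_{R_{\s}}^2=\sum_{i=1}^n|p_i\rangle\langle p_i|$, whence $\Delta_{R_{\s}}=n$, $\ds{1}=\spa\{p_1,\dots,p_n\}$, and, by Proposition~\ref{representation of defect space},
\[
  \ds{m}=\spa\{\,z_1^{j_1}\cdots z_d^{j_d}p_i:\ 1\le i\le n,\ j_1+\cdots+j_d=m-1\,\}\qquad(m\in\Nat).
\]
Since a nonzero submodule of $H^2_d$ is infinite dimensional, the linear-independence criterion for maximality (the remark just after the definition of a maximal tuple, commuting case, applied with the basis $\xi_i=p_i$ of $\ds{1}$) shows that $\s$ is maximal if and only if the family $\{\,z_1^{j_1}\cdots z_d^{j_d}p_i:\ 1\le i\le n,\ j_1,\dots,j_d\in\Nat\,\}$ is linearly independent in $\Comp[\bm{z}]$.

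First I would dispose of the case $n=1$. If $\Delta_{R_{\s}}=1$ then $P_{\s}=M_{p_1}M_{p_1}^*$ is an orthogonal projection; since multiplication by the nonzero polynomial $p_1$ is injective on $H^2_d$ (so $M_{p_1}^*$ has dense range), idempotency of $M_{p_1}M_{p_1}^*$ forces $M_{p_1}^*M_{p_1}=I$, i.e.\ $p_1$ is an isometric multiplier of $H^2_d$. For $d\ge 2$ this is impossible for a non-constant polynomial: comparing $\norm{p_1}^2=\norm{p_1\cdot 1}^2=1$ with $\lim_{N\to\infty}\norm{p_1 z_k^N}^2/\norm{z_k^N}^2=1$ (using $\norm{z_k^N}=1$ in $H^2_d$ and the monotonicity of $N\mapsto\norm{p_1 z_k^N}^2$) forces $p_1$ to involve only powers of $z_k$, simultaneously for every $k=1,\dots,d$, so $p_1$ is a unimodular constant. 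Thus the hypothesis already rules out $n=1$, and we may assume $n\ge 2$.

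For $n\ge 2$ the family above is never linearly independent. Writing (multi-index notation) $p_1=\sum_{\bm{j}}a_{\bm{j}}z^{\bm{j}}$ and $p_2=\sum_{\bm{k}}b_{\bm{k}}z^{\bm{k}}$, the trivial syzygy $p_2\cdot p_1-p_1\cdot p_2=0$ becomes
\[
  \sum_{\bm{k}}b_{\bm{k}}\,(z^{\bm{k}}p_1)-\sum_{\bm{j}}a_{\bm{j}}\,(z^{\bm{j}}p_2)=0,
\]
a non-trivial relation among the vectors of the family, its coefficients not all zero because $p_1\neq 0\neq p_2$. Hence the family is linearly dependent, and by the criterion above $\s$ is not maximal. (Alternatively, for $n\ge 2$ one may conclude by a dimension count: the $n\binom{m-1+d}{d}$ vectors $z^{\bm{j}}p_i$ with $j_1+\cdots+j_d\le m-1$ all lie in the polynomials of degree $\le m-1+\max_i\deg p_i$, and $n\binom{m-1+d}{d}>\binom{m-1+\max_i\deg p_i+d}{d}$ for $m$ large.) The only genuinely delicate step is the degenerate case $n=1$, which is settled above via the isometric-multiplier remark (and which, just as in the preceding theorem on homogeneous submodules, is the point at which "a collection of polynomials cannot be linearly independent" would otherwise fail); everything else is bookkeeping with the two quoted facts.
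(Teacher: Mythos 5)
Your proof is correct, and its skeleton is the same as the paper's: identify $\ds{1}=\spa\{p_1,\dots,p_n\}$, use Proposition~\ref{representation of defect space} to write each $\ds{m}$ as the span of the shifted generators $z^{\bm{j}}p_i$, and contradict maximality by exhibiting a linear dependence in that family. Where you genuinely add something is in justifying the dependence. The paper's proof is a one-line reference to the homogeneous theorem, whose key assertion --- ``since the $p_k$'s are polynomials those vectors cannot be linearly independent'' --- is false when $\dim\ds{1}=1$: for a single nonzero polynomial $p$ the family $\{z^{\bm{j}}p\}$ \emph{is} linearly independent, because $\Comp[\bm{z}]$ is an integral domain. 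You correctly isolate this degenerate case and dispose of it by a different mechanism: idempotency of $P_{\s}=M_{p_1}M_{p_1}^*$ together with injectivity of $M_{p_1}$ forces $M_{p_1}$ to be an isometric multiplier, which your norm computation (or Arveson's result that $H^2_d$, $d\ge 2$, has no non-constant isometric multipliers) rules out for non-constant polynomials; so the hypothesis is vacuous there. For $n\ge 2$ the explicit syzygy $p_2\cdot p_1-p_1\cdot p_2=0$ (or the asymptotic dimension count, which also only works for $n\ge 2$) is exactly the dependence the paper gestures at. Two small bookkeeping points: the hypothesis does not guarantee that the $p_i$ are linearly independent, so $\Delta_{R_{\s}}$ may be strictly less than $n$ and the maximality criterion must be applied to a basis of $\ds{1}$ chosen from among the $p_i$ --- after which your dichotomy on $\dim\ds{1}$ goes through verbatim, since $\dim\ds{1}=1$ again reduces to $P_{\s}=M_qM_q^*$ for a single non-constant polynomial $q$; and your sentence ``If $\Delta_{R_{\s}}=1$ then $P_{\s}=M_{p_1}M_{p_1}^*$'' conflates $n=1$ with $\Delta_{R_{\s}}=1$, so the case split should be stated in terms of $\dim\ds{1}$. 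Neither affects the substance.
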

  \begin{proof}
  Since $P_{\s}=\sum_{i=1}^n M_{p_i}M_{p_i}^*$,
  the first defect space $\ds{1}=\spa\{p_i: 1 \leq i \leq n\}$.
  Now the argument used to prove the previous theorem can be adapted to
  show that $\s$ is not maximal.
  \end{proof}

  Since $R_{\s}$ is a pure contractive $d$-tuple for a submodule $\s$,
  the adjoint of the Poisson kernel $K(R_{\s})$ 
  in this case is a unique bounded linear operator
   $ K(R_{\s})^*: H^2_d \ot \ds{1}\to H^2_d$ defined
   by taking linear and continuous extension
   of the following prescription:
   \begin{equation*}
    p\ot\xi\mapsto p D_{R_{\s}}\xi,\quad (p\in \mathbb{C}[\bm{z}], \xi\in \ds{1}).
   \end{equation*}
   The range of this map is precisely $\s$.
  A characterization for maximal submodules in terms of
  this operator is given next.

  \begin{thm}
  Let $\s $ be a submodule of $H^2_d$ and $\Delta_{R_{\s}}<\infty$.
    Then the following are equivalent:\\
  \textup{(i)} $\s$ is maximal.\\
  \textup{(ii)} $(\mathbb{C}[\bm{z}]\otimes \ds{1}) \cap \ker K(R_{\s})^* = \{0\}$,
  where the operator $K(R_{\s})^*$ is as above.

  \end{thm}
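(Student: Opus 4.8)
The plan is to reduce the statement to the characterization of maximality already available in the non-commuting setting (Theorem~\ref{characterisation of pure tuple}, condition (iv)), specialized to the commuting case of the Drury--Arveson module. Since $R_{\s}$ is a pure contractive $d$-tuple with finite defect $\Delta_{R_{\s}} = n$, and since $H^2_d$ is the commuting analogue of the Fock space, the natural step is to first record the commuting analogue of Theorem~\ref{characterisation of pure tuple} for a tuple with arbitrary finite first defect dimension, exactly as indicated in the first Remark following that theorem. In that remark it is observed that the condition ``$(\Gamma_{n]} \ot \ds{1}) \cap \ker K(T)^* = \{0\}$ for all $n$'' is independent of the hypothesis $\dt = 1$: writing $\ds{1} = \spa\{\phi_1,\dots,\phi_n\}$ and $\psi_i = D_T \phi_i$, one has that $T$ is maximal iff the vectors $\{T_f \psi_i : f \in F_{k]},\ i = 1,\dots,n\}$ are linearly independent for every $k$, and this is equivalent to $\sum_{i,f} a_{f,i} e_f \ot \phi_i \in \ker K(T)^*$ forcing all $a_{f,i} = 0$.

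First I would transcribe this to the commuting case: replace the full Fock space $\Fockd \ot \ds{1}$ by $H^2_d \ot \ds{1}$, replace the free monomials $e_f$ by the commuting monomials $z^\alpha$, $\alpha \in \Nat^d$, and replace the Poisson kernel adjoint by the map $K(R_{\s})^*: H^2_d \ot \ds{1} \to H^2_d$ described just above the theorem, namely $p \ot \xi \mapsto p\, D_{R_{\s}} \xi$. The explicit description of defect spaces recorded in the excerpt, $\ds{m} = \spa\{z_1^{j_1}\cdots z_d^{j_d} \phi_i : i = 1,\dots,n,\ \sum_t j_t = m-1\}$, together with the fact (from Theorem~\ref{generator} and the computation $D_{R_{\s}}^2 = \sum_k |\phi_k\rangle\langle\phi_k|$) that $\ran D_{R_{\s}} = \ds{1} = \spa\{\phi_i\}$ as well, means that $\{D_{R_{\s}}\phi_i\}$ and $\{\phi_i\}$ span the same space. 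Thus maximality of $R_{\s}$ is precisely the statement that for every $m$ the spanning set of $\ds{m}$, namely $\{z^\alpha \phi_i : |\alpha| \le m-1,\ 1 \le i \le n\}$, is linearly independent.

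The bridge to condition (ii) is then essentially bookkeeping. A finite element of $\mathbb{C}[\bm z] \ot \ds{1}$ can be written $\sum_{i=1}^n p_i \ot \xi_i$, or after expanding in monomials and re-collecting, $\sum_{i,\alpha} a_{\alpha,i}\, z^\alpha \ot \phi_i$; applying $K(R_{\s})^*$ gives $\sum_{i,\alpha} a_{\alpha,i}\, z^\alpha D_{R_{\s}}\phi_i$. Using the relation between $D_{R_{\s}}\phi_i$ and the $\phi_j$'s (a fixed invertible change of basis on the $n$-dimensional space $\ds{1}$), this vanishes iff $\sum_{i,\alpha} b_{\alpha,i}\, z^\alpha \phi_i = 0$ in $H^2_d$ for the transformed coefficients $b$. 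Hence $(\mathbb{C}[\bm z] \ot \ds{1}) \cap \ker K(R_{\s})^* = \{0\}$ says exactly that no nontrivial finite linear combination of the vectors $\{z^\alpha \phi_i\}$ is zero, i.e.\ that these vectors are linearly independent for all multidegrees, which by the preceding paragraph is maximality of $\s$. I would close by remarking that since $\ker K(R_{\s})^*$ is closed and the intersection in question only involves polynomial tensors, passing between ``finite linear combinations'' and the closed-subspace language is harmless.

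The main obstacle, such as it is, is purely organizational: one must be careful that $D_{R_{\s}}$ restricted to $\ds{1}$ is invertible (so that $\{D_{R_{\s}}\phi_i\}$ and $\{\phi_i\}$ are interchangeable as spanning sets), which follows from $\ran D_{R_{\s}}^2 = \ran D_{R_{\s}} = \ds{1}$ being finite dimensional, and that the reindexing from the $\ds{1}$-valued polynomial $\sum p_i \ot \xi_i$ to the monomial expansion $\sum a_{\alpha,i} z^\alpha \ot \phi_i$ is a genuine bijection on coefficient arrays. Once these two points are nailed down, the equivalence (i) $\Leftrightarrow$ (ii) is immediate from the description of $\ds{m}$ and the definition of maximality; no hard analysis is needed, since purity of $R_{\s}$ (hence the validity of the Poisson-kernel model) is automatic for submodules of $H^2_d$.
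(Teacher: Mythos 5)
Your proposal is correct and follows essentially the same route as the paper: the paper's proof simply states that the result follows by a slight modification of the argument in the first remark after Theorem~\ref{characterisation of pure tuple} (the generalization of condition (iv) to finite defect dimension), adapted to the commuting tuple $R_{\s}$, which is exactly the adaptation you carry out. Your additional care about the invertibility of $D_{R_{\s}}|_{\ds{1}}$ and the reindexing of polynomial tensors just makes explicit what the paper leaves to the reader.
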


  \begin{proof}
 The proof follows from a slight modification of the argument given in the 
 first remark after Theorem~\ref{characterisation of pure tuple}
 as the tuple $R_{\s}$ in this case is a commuting tuple.

% Let $\Delta_{R_{\s}} = n$. Then by Theorem~\ref{generator}
%  $P_{\s}=\sum_{i=1}^nM_{\phi_i}M_{\phi_i}^*$ and
%  therefore $\ds{1}=\spa\{\phi_i: i=1,\dots,n\}$
%  for some multipliers $\phi_i$'s.
%  Now the proof follows from the following chain of
%  equivalent conditions \[\sum_{i,j}\lambda_{ij}p_i \otimes \phi_j
%   \in\ker L\Leftrightarrow \sum_{i,j} \lambda_{ij} p_i D_{R_{\s}} (\phi_j)= 0
%  \Leftrightarrow \sum_{i,j}\lambda_{ij} p_i\phi_j=0,\]which is
%  equivalent to the linearly independence of the vectors
%  $\{z_1^{m_1}\dots z_d^{m_d}\phi_i: m_1,\dots, m_d\in \mathbb{N},
%  1 \leq i \leq n\}$. This completes the proof.
  \end{proof}
  We conclude the paper with the comment that
  examples of proper maximal submodules for $d\ge 2$
  are not known. We feel that any proper submodule of
  Drury-Arveson module is not maximal but we do not
  have any proof of it
  yet.
  
  \vspace{.2in}
 
\noindent {\bf Acknowledgement:} The hospitality of Indian Statistical Institute,
 Bangalore centre is warmly and gratefully acknowledged by the first author.
 The third author was supported by UGC Centre for Advanced Study.

\end{document}